\documentclass{amsart}
\usepackage[utf8]{inputenc}
\usepackage[margin=1in]{geometry}
\usepackage{upgreek,amsthm,bbm,bm}
\usepackage{color,amssymb, comment, mathrsfs,tikz,amsmath,amsfonts,stmaryrd,tikz-cd,hyperref,appendix,cancel,faktor,verbatim}
\usepackage{amsmath,amsthm,amssymb}
\usepackage{tikz,tikz-cd,color}
\usepackage{enumitem}
\usepackage{mathtools}
\usepackage{comment}
\usetikzlibrary{matrix}
\usetikzlibrary{arrows}
\usepackage[all]{xy}
\usepackage{amsmath, nccmath}
\usepackage{geometry}

\hypersetup{
    colorlinks=true,
    linkcolor=blue,
    filecolor=magenta,      
    urlcolor=cyan,
} 

\usepackage[capitalise]{cleveref}
\crefformat{equation}{(#2#1#3)}
\crefrangeformat{equation}{(#3#1#4--#5#2#6)}
\crefformat{enumi}{(#2#1#3)}
\crefrangeformat{enumi}{(#3#1#4--#5#2#6)}

\newtheorem{theorem}{Theorem}[section]
\newtheorem{lemma}[theorem]{Lemma}
\newtheorem{proposition}[theorem]{Proposition}
\newtheorem{corollary}[theorem]{Corollary}
\theoremstyle{definition}
\newtheorem{definition}[theorem]{Definition}
\newtheorem{construction}[theorem]{Construction}
\newtheorem{example}[theorem]{Example}

\theoremstyle{remark}
\newtheorem{remark}[theorem]{Remark}

\newtheorem{chunk}[theorem]{}

\newcommand{\kk}{\Bbbk}
\newcommand{\Der}{\mathrm{Der}}
\newcommand{\N}{\mathbb{N}}
\newcommand{\Q}{\mathbb{Q}}
\newcommand{\Z}{\mathbb{Z}}

\newcommand{\T}{\mathbb{T}}
\newcommand{\Hom}{\mathrm{Hom}}
\newcommand{\hooklongrightarrow}{\lhook\joinrel\longrightarrow}
\newcommand{\ii}{\{i\}}
\newcommand{\lp}{\left(}
\newcommand{\rp}{\right)}
\newcommand{\Diff}{\mathrm{Diff}}

\newcommand{\m}{\mathfrak{m}}
\newcommand{\vp}{\varphi}

\title{The Taylor resolution over a skew polynomial ring}
%\author{Luigi Ferraro, Desiree Martin and W. Frank Moore }
\date{}

\usepackage[capitalise]{cleveref}
\crefformat{equation}{(#2#1#3)}
\crefrangeformat{equation}{(#3#1#4--#5#2#6)}
\crefformat{enumi}{(#2#1#3)}
\crefrangeformat{enumi}{(#3#1#4--#5#2#6)}

\author[L.~Ferraro]{Luigi Ferraro}
\address{Department of Mathematics,
Texas Tech University, Lubbock, TX 79409, U.S.A.}
\email{lferraro@ttu.edu}

\author[D. Martin]{Desiree Martin}
\address{Department of Mathematics,
Syracuse University, Syracuse, NY 13244, U.S.A.}
\email{dmarti02@syr.edu}

\author[W. F. Moore]{W. Frank Moore}
\address{Department of Mathematics,
Wake Forest University, Winston-Salem, NC 27109, U.S.A.}
\email{moorewf@wfu.edu}

\begin{document}

\begin{abstract}
Let $\kk$ be a field and let $I$ be a monomial ideal in the polynomial ring $Q=\kk[x_1,\ldots,x_n]$. In her thesis, Taylor introduced a complex which provides a finite free resolution for $Q/I$ as a $Q$-module. Later, Gemeda constructed a differential graded structure on the Taylor resolution. More recently, Avramov showed that this differential graded algebra admits divided powers. We generalize each of these results to monomial ideals in a skew polynomial ring $R$. Under the hypothesis that the skew commuting parameters defining $R$ are roots of unity, we prove as an application that as $I$ varies among all ideals generated by a fixed number of monomials of degree at least two in $R$, there is only a finite number of possibilities for the Poincar\'{e} series of $\kk$ over $R/I$ and for the isomorphism classes of the homotopy Lie algebra of $R/I$ in cohomological degree larger or equal to two.
\end{abstract}

\maketitle

\section{Introduction}
Let $\kk$ be a field, let $n$ be a nonzero natural number and let $\mathfrak{q}=(q_{i,j})_{i,j=1,\ldots,n}$ be an $n\times n$ matrix with entries in $\kk^*$ such that $q_{i,j}=q_{j,i}^{-1}$ for all $i,j$ and $q_{i,i}=1$ for all $i$. By $R$ we denote the skew polynomial ring $\kk_\mathfrak{q}[x_1,\ldots,x_n]$, i.e. a polynomial ring where $x_ix_j=q_{i,j}x_jx_i$ for all $i,j$. In this paper we construct a finite free resolution 
\[
\mathbb{T}:0\longrightarrow T_s\longrightarrow T_{s-1}\longrightarrow\cdots\longrightarrow T_1\rightarrow R
\]
of cyclic modules of the form $R/I$ where $I$ is a monomial ideal of $R$ minimally generated by $s$ monomials. When the ring $R$ is commutative, i.e. when $q_{i,j}=1$ for all $i,j$, this resolution coincides with the well-known Taylor resolution constructed in \cite{Taylor}. As the Taylor resolution in the commutative case was a generalization of the Koszul complex, the resolution that we construct generalizes the skew Koszul complex constructed in \cite{FM}, which corresponds to the case where the monomials minimally generating $I$ form a regular sequence.

The study of multiplicative structures on finite free resolution of cyclic modules was initiated by Buchsbaum and Eisenbud in \cite{BE}, where they prove that a finite free resolution always admits a multiplicative structure associative only up to homotopy. In \cite{obstructions}, Avramov gave the first example of a cyclic module whose free resolution did not admit an associative multiplicative structure. Since the work of Buchsbaum and Eisenbud a lot of research has been devoted on the study of free resolutions with a multiplicative structure, see for example \cite{kustin} and the references therein.

In \cite{gemeda}, Gemeda showed that the Taylor resolution constructed in \cite{Taylor} admits a multiplicative structure, making it a differential graded algebra (DG algebra). In this paper we show that the free resolution $\mathbb{T}$ that we construct admits a structure of (noncommutative) DG algebra that coincides with the structure given by Gemeda in the commutative case.

In \cite{FM} it is shown that to a DG-algebra $A$ with divided powers (DG $\Gamma$-algebra) over the ring $R/I$ (or more generally over a quotient of $R$ by homogeneous normal elements), one can associate a (color) graded Lie algebra denoted by $\pi(A)$ and called \emph{homotopy Lie algebra} of $A$. The algebraic properties of $\pi(A)$ reflect the homological properties of the DG algebra $A$. We show that the DG algebra structure that we define on $\mathbb{T}$ admits divided powers, making $\mathbb{T}$ a DG $\Gamma$-algebra; this generalizes a result of Avramov \cite{AvramovTaylor}. Moreover we show that the homotopy Lie algebra of the ring $R/I$ in cohomological degree $\geq2$ is isomorphic to the Lie algebra of $\mathbb{T}\otimes_R\kk$. This gives a way to compute most of the homotopy Lie algebra of $R/I$ by studying the homotopy Lie algebra of the finite dimensional $\kk$-algebra $\mathbb{T}\otimes_R\kk$.

We use the connection between $\pi(R/I)$ and $\pi(\mathbb{T}\otimes_R\kk)$ to show that if the $q_{i,j}$ are roots of unity for all $i,j$ and if $I$ ranges among all the monomial ideals of $R$ minimally generated by a fixed number of monomials of degree at least two, then there are only a finite number of isomorphism classes for $\pi^{\geq2}(R/I)$, see \cref{cor:FiniteIsoClasses}, where a more general version of this statement is given. We provide an example showing that the hypothesis on the $q_{i,j}$ is needed, see \cref{example}. The $\kk$-vector space dimension of the graded components of $\pi(R/I)$ is connected to the Poincar\'{e} series of $\kk$ over $R/I$, i.e. the generating function of the Betti numbers of $\kk$ over $R/I$. As a result we deduce that if the $q_{i,j}$ are roots of unity for all $i,j$ and if $I$ ranges among all ideals of $R$ minimally generated by a fixed number of monomials of degree at least two, then there are only finitely many possibilities for the Poincar\'{e} series of $\kk$ over $R/I$. 
These last two statements are generalizations of results due to Avramov, that can be found in \cite{AvramovTaylor}.

The paper is organized as follows. In \cref{sec:Bicharacters} we construct two bicharacters associated to the ring $R$ and study their properties. These bicharacters are used in \cref{sec:TaylorRes} to construct a resolution $\mathbb{T}$ of $R/I$ over $R$ and their properties are used in proofs throughout the paper. In \cref{sec:DGA} we show that the resolution constructed in \cref{sec:TaylorRes} admits a structure of a (noncommutative) differential graded algebra and in \cref{sec:Gamma} we show that this DG algebra admits divided powers. In \cref{sec:pi} we show that the homotopy Lie algebra of $R/I$ in cohomological degree $\geq2$ is isomorphic to the homotopy Lie algebra of $\mathbb{T}\otimes_R\kk$. Finally, \cref{sec:Applications} is devoted to applications, in particular we state and prove the results on the finiteness of the isomorphism classes of the homotopy Lie algebras and of the Poincar\'{e} series of $\kk$ that were stated above.

\section{Bicharacters}\label{sec:Bicharacters}

Let $\kk$ be a commutative ring. Let $n$ be a positive integer and let $\mathfrak{q}=(q_{i,j})$ be a $n \times n$ matrix with entries in $\Bbbk^{*}$ such that $q_{i,i}=1$ for all $i=1,..,n$ and $q_{i,j}=q^{-1}_{j,i}$ for all $i,j=1,...,n$.
The \emph{skew polynomial ring} associated to the matrix $\mathfrak{q}$, denoted by $R=\kk_\mathfrak{q}[x_1,\ldots,x_n]$, is the Ore extension of $\kk$ with $n$ variables $x_1,\ldots,x_n$, such that $\kk$ is central in $R$ and $x_ix_j=q_{i,j}x_jx_i$ for all $i,j=1,\ldots,n$.

We say that a homogeneous element $f\in R$ is \textit{normal} if there exists a graded ring automorphism $\sigma$ such that $fg=\sigma(g)f$ for all homogeneous elements $g \in R$. We call $\sigma$ the \textit{normalizing automorphism} of $f$.

\begin{remark}
Let $G=\langle \sigma_1,\ldots,\sigma_n \rangle$ be the subgroup of the group of graded ring automorphisms of $R$, where $\sigma_i$ is the normalizing automorphism of $x_i$ for each $i=1, \ldots ,n$. We define a $\Z \times G$ grading on $R$. The $\Z$-degree of a monomial $x_1^{\alpha_1}\cdots x_n^{\alpha_n}$ is $ \alpha_1d_1+\cdots+\alpha_nd_n$ where $\mathrm{deg}(x_i)=d_i \in \Z^{+}$, and the $G$-degree of $x_1^{\alpha_1}\cdots x_n^{\alpha_n}$ is $\sigma_1^{\alpha_1} \cdots \sigma_n^{\alpha_n} \in G$. It is useful to notice that the $G$-degree of a monomial is exactly the normalizing automorphism of that monomial, i.e. $x_1^{\alpha_1} \cdots x_n^{\alpha_n}f= \sigma_1^{\alpha_1} \cdots \sigma_n^{\alpha_n}(f) x_1^{\alpha_1} \cdots x_n^{\alpha_n}$ for all $f\in R$. We call $G$ the \emph{group of colors} of $R$.

If $G$ is an abelian group, then an \emph{alternating bicharacter} on $G$ is a function $\chi:G\times G\rightarrow\kk^*$ such that for all $\alpha,\beta,\sigma\in G$
\begin{align*}
\chi(\sigma,\alpha\beta)&=\chi(\sigma,\alpha)\chi(\sigma,\beta),\\
\chi(\alpha\beta,\sigma)&=\chi(\alpha,\sigma)\chi(\beta,\sigma),\\
\chi(\sigma,\sigma)&=1.
\end{align*}

Let $G=\langle\sigma_1,\ldots, \sigma_n\rangle$ be the group of colors of a skew polynomial ring $R=\kk_\mathfrak{q}[x_1,\ldots,x_n]$. We define a map $$\chi: G \times G \rightarrow \kk^{*}$$ by extending the assignment $(\sigma_i, \sigma_j) \mapsto q_{i,j} $ to all of $G$ so that $\chi$ is a bicharacter. If $m,n \in R$ are $G$-homogeneous elements of $G$-degree $\sigma$ and $\tau$ respectively, we abuse notation and write $\chi(m,n)$ for $\chi(\sigma, \tau)$. Notice that if $m,n \in R$ are $G$-homogeneous then $mn=\chi(m,n)nm$. These are examples of \emph{color commutative rings}, see \cite[Definition 3.3]{FM} for the definition.
\end{remark}

For the remainder of the paper $R$ will denote a skew polynomial ring $\kk_\mathfrak{q}[x_1,\ldots,x_n]$, $G$ its group of colors, and $\chi$ the bicharacter on $G$ defined in the previous Remark.

All $R$-modules in this paper are right \emph{color $R$-modules} where a color $R$-module is an $R$-module with a compatible $\mathbb{Z}\times G$-grading. Note that a color right module $M$ can be turned into a \emph{color bimodule} by letting $rm= \chi(r,m)mr$ for $G$-homogeneous elements $r\in R$ and $m\in M$ .

\begin{definition}
 Let $ \mathbf{x^a}=x_{1}^{a_1}x_{2}^{a_2} \cdots x_{n}^{a_n}$ and $ \mathbf{x^b}=x_{1}^{b_1}x_{2}^{b_2} \cdots x_{n}^{b_n}$ be monomials in $R$ with $\mathbf{a}=(a_1, \ldots ,a_n) \in \N^{n}$. We say that $\mathbf{x^b}$ \textit{divides} $\mathbf{x^a}$, and write $\mathbf{x^b} | \mathbf{x^a}$, if $b_i \leq a_i$ for all $i=1,\ldots,n$. If $\mathbf{x^b} | \mathbf{x^a}$, then we define $\frac{\mathbf{x^a}}{\mathbf{x^b}}$ to be the monomial $\mathbf{x^{a-b}}$, where $\mathbf{a-b}=(a_1-b_1, \ldots, a_n-b_n) \in \N^{n}$.
\end{definition}

\begin{definition}
Let $\mathbf{x^a}=x_{1}^{a_1}x_{2}^{a_2} \cdots x_{n}^{a_n}$ and $\mathbf{x^b}=x_{1}^{b_1}x_{2}^{b_2} \cdots x_{n}^{b_n}$ be monomials in $R$. We define $\mathbf{x^{a}*x^{b}}$ to be the monomial $\mathbf{x^{a+b}}$, where $\mathbf{a+b}=(a_1+b_1, \ldots, a_n+b_n) \in \N^{n}$.

\end{definition}

\begin{remark}
If $X$ is the set of monomials of $R$, then $(X,*)$ is a monoid.
\end{remark}

\begin{definition}
 Let $\mathbf{x^a}$ and $ \mathbf{x^b}$ be monomials in $R$. We define $C(\mathbf{x^a},\mathbf{x^b})$ to be the constant such that $\mathbf{x^a x^b}= C(\mathbf{x^a}, \mathbf{x^b})\mathbf{x^{a}*x^{b}}$.
\end{definition}

\begin{lemma}
Let $\mathbf{x}^{\boldsymbol{\alpha}}$ and $\mathbf{x}^{\boldsymbol{\beta}}$ be monomials in $R$, with $\mathbf{\alpha}=(\alpha_1,\ldots,\alpha_n)$ and $\mathbf{\beta}=(\beta_1,\ldots,\beta_n)$. Then the following statements hold

\begin{enumerate}[label=(\alph*)]
    \item
$C(\mathbf{x^{\boldsymbol{\alpha}}},\mathbf{x^{\boldsymbol{\beta}}})=\displaystyle\prod_{i>j}\chi(x_i,x_j)^{\alpha_{i}\beta_{j}}$.
    \item
$C : X \times X \rightarrow \kk^{*}$ is a bicharacter on the monoid $X$.
    \item
$\frac{C \left(\mathbf{x}^{\boldsymbol{\alpha}},\mathbf{x}^{\boldsymbol{\beta}}\right)}{C \left( \mathbf{x}^{\boldsymbol{\beta}}, \mathbf{x}^{\boldsymbol{\alpha}} \right)}= \chi\left( \mathbf{x}^{\boldsymbol{\alpha}}, \mathbf{x}^{\boldsymbol{\beta}}\right)$.
\end{enumerate}
\end{lemma}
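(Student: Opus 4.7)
The plan is to establish part (a) by a direct normal-ordering computation and then derive parts (b) and (c) as essentially formal consequences of the closed formula. The definition $\mathbf{x}^{\boldsymbol{\alpha}}\mathbf{x}^{\boldsymbol{\beta}}=C(\mathbf{x}^{\boldsymbol{\alpha}},\mathbf{x}^{\boldsymbol{\beta}})\,\mathbf{x}^{\boldsymbol{\alpha}+\boldsymbol{\beta}}$ fixes $C$ uniquely, so everything reduces to computing the scalar produced when rearranging the variables into lexicographic order.

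For part (a), I would write
\[
\mathbf{x}^{\boldsymbol{\alpha}}\mathbf{x}^{\boldsymbol{\beta}}
=(x_1^{\alpha_1}\cdots x_n^{\alpha_n})(x_1^{\beta_1}\cdots x_n^{\beta_n})
\]
and normalize the right-hand side to $\mathbf{x}^{\boldsymbol{\alpha}+\boldsymbol{\beta}}$ by repeatedly applying $x_jx_i=\chi(x_j,x_i)\,x_ix_j$. For each pair $j>i$, bringing the $\beta_i$ copies of $x_i$ sitting in the second factor to the left of the $\alpha_j$ copies of $x_j$ sitting in the first factor requires $\alpha_j\beta_i$ elementary swaps and therefore contributes a factor $\chi(x_j,x_i)^{\alpha_j\beta_i}$. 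Multiplying these contributions over all pairs $j>i$ and relabeling $i\leftrightarrow j$ gives the asserted formula. The one thing to be careful about is the bookkeeping: the cleanest organization is to first fix the pair $(i,j)$ and then perform the single swaps within it, so that no transposition is counted twice and the $x_i$'s already moved past earlier $x_j$'s don't interfere with later moves.

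For part (b), once (a) is in hand the exponent $\alpha_i\beta_j$ in the formula is bilinear in $(\boldsymbol{\alpha},\boldsymbol{\beta})$, so multiplicativity of $C$ in each argument with respect to $*$ follows immediately, as does $C(1,\mathbf{x}^{\boldsymbol{\beta}})=C(\mathbf{x}^{\boldsymbol{\alpha}},1)=1$. (Alternatively one could argue intrinsically from associativity of multiplication in $R$, but the explicit formula makes this redundant.)

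For part (c), I would expand $\chi(\mathbf{x}^{\boldsymbol{\alpha}},\mathbf{x}^{\boldsymbol{\beta}})=\prod_{i,j}\chi(x_i,x_j)^{\alpha_i\beta_j}$ via the bicharacter property of $\chi$, drop the diagonal terms using $\chi(x_i,x_i)=1$, split the remaining product into its $i>j$ and $i<j$ pieces, and convert the latter into a product over $i>j$ by relabeling and using $\chi(x_i,x_j)=\chi(x_j,x_i)^{-1}$. The outcome is $\prod_{i>j}\chi(x_i,x_j)^{\alpha_i\beta_j-\alpha_j\beta_i}$, which from (a) is exactly $C(\mathbf{x}^{\boldsymbol{\alpha}},\mathbf{x}^{\boldsymbol{\beta}})/C(\mathbf{x}^{\boldsymbol{\beta}},\mathbf{x}^{\boldsymbol{\alpha}})$. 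The main obstacle for the whole lemma is therefore really only the index bookkeeping in (a); (b) and (c) are purely formal manipulations of the resulting product formula.
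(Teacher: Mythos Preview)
Your proposal is correct. Parts (a) and (b) match the paper's argument essentially line for line: the paper also normal-orders $\mathbf{x}^{\boldsymbol{\alpha}}\mathbf{x}^{\boldsymbol{\beta}}$ by moving each $x_i^{\alpha_i}$ past the block $x_1^{\beta_1}\cdots x_i^{\beta_i}$ and then reads off the bicharacter property from bilinearity of the exponent $\alpha_i\beta_j$.

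For part (c) you take a slightly different route. You expand $\chi(\mathbf{x}^{\boldsymbol{\alpha}},\mathbf{x}^{\boldsymbol{\beta}})$ via the bicharacter axioms into $\prod_{i>j}\chi(x_i,x_j)^{\alpha_i\beta_j-\alpha_j\beta_i}$ and then invoke the explicit formula from (a) to recognize this as the ratio of $C$'s. The paper instead argues directly from the definitions, without using (a) at all: since $\mathbf{x}^{\boldsymbol{\alpha}}\mathbf{x}^{\boldsymbol{\beta}}=C(\mathbf{x}^{\boldsymbol{\alpha}},\mathbf{x}^{\boldsymbol{\beta}})\mathbf{x}^{\boldsymbol{\alpha}+\boldsymbol{\beta}}$ and also $\mathbf{x}^{\boldsymbol{\alpha}}\mathbf{x}^{\boldsymbol{\beta}}=\chi(\mathbf{x}^{\boldsymbol{\alpha}},\mathbf{x}^{\boldsymbol{\beta}})\mathbf{x}^{\boldsymbol{\beta}}\mathbf{x}^{\boldsymbol{\alpha}}=\chi(\mathbf{x}^{\boldsymbol{\alpha}},\mathbf{x}^{\boldsymbol{\beta}})C(\mathbf{x}^{\boldsymbol{\beta}},\mathbf{x}^{\boldsymbol{\alpha}})\mathbf{x}^{\boldsymbol{\alpha}+\boldsymbol{\beta}}$, comparing coefficients gives the identity immediately. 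Your approach has the virtue of being concrete and self-contained once (a) is established; the paper's is shorter and makes transparent that (c) is really just the statement that $\chi$ encodes the commutation relation while $C$ encodes the normal-ordering constant.
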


\begin{proof}
$\hphantom{a}$
\begin{enumerate}[label=(\alph*)]
\item We first notice that if $j<n$, then
\[
x_n^{\alpha_n}x_j^{\beta_j}=\chi(x_n,x_j)^{\alpha_n\beta_j}x_j^{\beta_j}x_n^{\alpha_n},
\]
thus 
\[\mathbf{x^{\boldsymbol{\alpha}}x^{\boldsymbol{\beta}}} = \displaystyle\prod_{n>j}\chi(x_n,x_j)^{\alpha_n\beta_j} x_1^{\alpha_1} \cdots x_{n-1}^{\alpha_{n-1}}x_1^{\beta_1} \cdots x_{n-1}^{\beta_{n-1}}x_{n}^{\alpha_n+\beta_n}.
\]
More generally if $j<i$, then
\[
x_i^{\alpha_i}x_j^{\beta_j}=\chi(x_i,x_j)^{\alpha_i\beta_j}x_j^{\beta_j}x_i^{\alpha_i},
\]
thus
\[
x_i^{\alpha_i}x_1^{\beta_1}\cdots x_i^{\beta_i}=\prod_{i>j}\chi(x_i,x_j)^{\alpha_{i}\beta_{j}}x_1^{\beta_1}\cdots x_i^{\alpha_i+\beta_i}.
\]
Therefore inductively one gets
\[
\mathbf{x^{\boldsymbol{\alpha}} x^{\boldsymbol{\beta}}}= \prod_{i>j}\chi(x_i,x_j)^{\alpha_{i}\beta_{j}} x_1^{\alpha_1+\beta_1} \cdots x_n^{\alpha_n+\beta_n},
\]
which by definition implies $C(\mathbf{x^{\boldsymbol{\alpha}}},\mathbf{x^{\boldsymbol{\beta}}})= \displaystyle\prod_{i>j}\chi(x_i,x_j)^{\alpha_{i}\beta_{j}}$.

    \item
Let $\mathbf{x^{\boldsymbol{\alpha}}}, \mathbf{x^{\boldsymbol{\beta}}}, \mathbf{x^{\boldsymbol{\gamma}}}$ be monomials in $R$. Then $$C(\mathbf{x^{\boldsymbol{\alpha}}}, \mathbf{x^{\boldsymbol{\beta}}})C(\mathbf{x^{\boldsymbol{\alpha}}},\mathbf{x^{\boldsymbol{\gamma}}})= \displaystyle\prod_{i>j}\chi(x_i,x_j)^{\alpha_{i}\beta_{j}} \displaystyle\prod_{i>j}\chi(x_{i},x_{j})^{\alpha_{i}\gamma_{j}}= \displaystyle\prod_{i>j}\chi(x_i,x_j)^{\alpha_{i}(\beta_{j}+\gamma_{j})}=C(\mathbf{x^{\boldsymbol{\alpha}}},\mathbf{x^{\boldsymbol{\beta+\gamma}}}),$$
and
$$C(\mathbf{x^{\boldsymbol{\alpha}}}, \mathbf{x^{\boldsymbol{\beta}}})C(\mathbf{x^{\boldsymbol{\gamma}}},\mathbf{x^{\boldsymbol{\beta}}})= \displaystyle\prod_{i>j}\chi(x_i,x_j)^{\alpha_{i}\beta_{j}} \displaystyle\prod_{i>j}\chi(x_{i},x_{j})^{\gamma_{i}\beta_{j}}= \displaystyle\prod_{i>j}\chi(x_i,x_j)^{(\alpha_{i}+ \gamma_{i})\beta_{j}}=C(\mathbf{x^{\boldsymbol{\alpha+\gamma}}},\mathbf{x^{\boldsymbol{\beta}}}).$$
Thus, $C$ is a bicharacter of the monoid $X$.
\item
By definition $\mathbf{x}^{\boldsymbol{\alpha}}\mathbf{x}^{\boldsymbol{\beta}}= C\left( \mathbf{x}^{\boldsymbol{\alpha}},\mathbf{x}^{\boldsymbol{\beta}}\right)\mathbf{x}^{\boldsymbol{\alpha}+\boldsymbol{\beta}}$. However,
\begin{align*}
\mathbf{x}^{\boldsymbol{\alpha}}\mathbf{x}^{\boldsymbol{\beta}}&=\chi\left(\mathbf{x}^{\boldsymbol{\alpha}},\mathbf{x}^{\boldsymbol{\beta}}\right)\mathbf{x}^{\boldsymbol{\beta}}\mathbf{x}^{\boldsymbol{\alpha}}\\
&=\chi\left(\mathbf{x}^{\boldsymbol{\alpha}},\mathbf{x}^{\boldsymbol{\beta}}\right)C\left( \mathbf{x}^{\boldsymbol{\beta}},\mathbf{x}^{\boldsymbol{\alpha}}\right)\mathbf{x}^{\boldsymbol{\alpha}+\boldsymbol{\beta}}.\qedhere\\
\end{align*}
\end{enumerate}
\end{proof}

\begin{remark}
We extend $C$ to the group of monomials in $x^{\pm1}_{1}, \ldots, x_{n}^{\pm1}$ by $$C(\mathbf{x^{\boldsymbol{\alpha-\beta}}},\mathbf{x^{\boldsymbol{\gamma-\delta}}})=C(\mathbf{x^{\boldsymbol{\alpha}}},\mathbf{x^{\boldsymbol{\gamma}}})C(\mathbf{x^{\boldsymbol{\alpha}}},\mathbf{x^{\boldsymbol{\delta}}})^{-1}C(\mathbf{x^{\boldsymbol{\beta}}},\mathbf{x^{\boldsymbol{\gamma}}})^{-1}C(\mathbf{x^{\boldsymbol{\beta}}},\mathbf{x^{\boldsymbol{\delta}}}).$$
\end{remark}

\begin{example}
In this example we show that the bicharacter $C$ cannot be defined as a bicharacter on the group of colors $G$. Indeed, let $q\in\kk^*$ be such that $q^2\neq1$, and let $R=\kk_\mathfrak{q}[x,y,z]$ be the skew polynomial ring such that $xy=qyx,xz=-zx,yz=-q^{-1}zy$. An easy check shows that the $G$-degrees of $x^2$ and $z^2$ are the same, but
\[
C(x^2,yz)=1,\quad\mathrm{and}\quad C(z^2,yz)=q^2.
\]
\end{example}

\begin{definition}
Let $\mathbf{x^{a _1}, \cdots, x^{a _m}}$ be monomials in $R$, where $\mathbf{a_i}=(a_{1,i} \ldots, a_{n,i})$. We define the \textit{least common multiple} of $\mathbf{x^{a_1}, \cdots, x^{a_m}}$, or $\mathrm{lcm}(\mathbf{x^{a_1}, \cdots, x^{a_m}})$, to be equal to $\mathbf{x}^{\mathrm{max} \; \mathbf{a_i}}$ where $\mathrm{max} \; \mathbf{a_i}=(\mathrm{max}_i \; a_{1,i} \;, \ldots,\; \mathrm{max}_i \; a_{n,i})$. If $m_1,\ldots,m_s$ are monomials and $F\subseteq[s]$, then $m_F$ denotes the least common multiple of the monomials $\{m_i\mid i\in F\}$.
\end{definition}

\section{The Taylor Resolution}\label{sec:TaylorRes}

\begin{construction}\label{cons:Taylor}
Let $m_1,\ldots,m_s$ be monomials in $R$. Let $T_1$ be the free $R$-module of rank $s$, and let $e_1,\ldots,e_s$ be its canonical basis. Let $T_i=\bigwedge^{i} T_1 \cong T_1^{\binom{s}{i}}$ for $i=0, \ldots, s$. A basis of $T_1^{\binom{s}{i}}$ is given by $\{e_F\mid |F|=i,F\subseteq[s]\}$.\\
We define maps $\partial_i:T_i\rightarrow T_{i-1}$ for $i=1,\ldots,s$ by $$ \partial_i (e_F) = \sum_{i \in F} e_{F \backslash\{i\}} (-1)^{\sigma(F,i)} C\left(m_{F \backslash\{i\}}, \frac{m_F}{m_{F \backslash\{i\}}}\right)^{-1} \frac{m_F}{m_{F \backslash\{i\}}}, $$
where $\sigma(F,i)= |\{j \in F : j< i\}|$.
In order for these maps to be homogeneous we set the $G$-degree and the internal degree of $e_{F}$ equal to the $G$-degree and the internal degree of $m_{F}$ respectively. 
\end{construction}

\begin{theorem}
Let $I$ be an ideal of $R$ generated by monomials $m_1,\ldots,m_s$, then $\mathbb{T}=(T_\bullet,\partial_\bullet)$ as defined above is a $\mathbb{Z}\times G$-graded complex of free $R$-modules.
\end{theorem}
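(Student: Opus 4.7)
The plan is to verify two things: that each $\partial_i$ is a $\mathbb{Z}\times G$-graded map, and that $\partial_{i-1}\partial_i=0$. Homogeneity is essentially automatic from the choice of grading on the generators $e_F$. Indeed, in each summand of $\partial_i(e_F)$ the symbol $e_{F\setminus\{i\}}$ carries the degree of $m_{F\setminus\{i\}}$ and is multiplied by the monomial $\tfrac{m_F}{m_{F\setminus\{i\}}}$; in the monoid of monomials this product has the same $\mathbb{Z}\times G$-degree as $m_F$, hence as $e_F$. The scalar coefficient lies in $\kk^{*}$ and is degree-zero, so $\partial_i$ preserves the bigrading.

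The substantive step is $\partial^2=0$. I would expand $\partial_{i-1}\partial_i(e_F)$ as a double sum over ordered pairs $(i,j)$ of distinct elements of $F$, pair up the contribution of $(i,j)$ with that of $(j,i)$ for each unordered pair $\{i<j\}$, and show the two contributions cancel. Writing $p_i=\tfrac{m_F}{m_{F\setminus\{i\}}}$ and $q_{i,j}=\tfrac{m_{F\setminus\{i\}}}{m_{F\setminus\{i,j\}}}$, the $(i,j)$-summand is
\[
(-1)^{\sigma(F,i)+\sigma(F\setminus\{i\},j)}\,C(m_{F\setminus\{i\}},p_i)^{-1}C(m_{F\setminus\{i,j\}},q_{i,j})^{-1}\,e_{F\setminus\{i,j\}}\cdot q_{i,j}p_i.
\]
Since $R$ acts on the right by color action, $q_{i,j}p_i=C(q_{i,j},p_i)\,\bigl(q_{i,j}*p_i\bigr)$, and the monoid product $q_{i,j}*p_i$ equals $\tfrac{m_F}{m_{F\setminus\{i,j\}}}$; the same monomial appears, by the symmetric computation, in the $(j,i)$-summand. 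For $i<j$ the identities $\sigma(F\setminus\{j\},i)=\sigma(F,i)$ and $\sigma(F\setminus\{i\},j)=\sigma(F,j)-1$ give an overall sign difference of $-1$, so it remains only to verify that the scalar
\[
C(m_{F\setminus\{i\}},p_i)^{-1}C(m_{F\setminus\{i,j\}},q_{i,j})^{-1}C(q_{i,j},p_i)
\]
is symmetric in $i$ and $j$. Using that $m_{F\setminus\{i\}}=m_{F\setminus\{i,j\}}*q_{i,j}$ in the monoid, the bicharacter property of $C$ gives $C(m_{F\setminus\{i\}},p_i)=C(m_{F\setminus\{i,j\}},p_i)\,C(q_{i,j},p_i)$; substituting and cancelling the $C(q_{i,j},p_i)^{\pm1}$ yields $C(m_{F\setminus\{i,j\}},\tfrac{m_F}{m_{F\setminus\{i,j\}}})^{-1}$, which is manifestly symmetric in $i$ and $j$. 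Hence the $(i,j)$- and $(j,i)$-contributions cancel, and $\partial^2=0$.

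The main obstacle is precisely this bicharacter bookkeeping: the skew commutation introduces nontrivial scalars in every monomial product $q\cdot p$, and the coefficients $C(m_{F\setminus\{i\}},p_i)^{-1}$ in the definition of $\partial$ are engineered so that these extra factors telescope. The proof therefore rests on the bicharacter axioms and the factorization $m_{F\setminus\{i\}}=m_{F\setminus\{i,j\}}*q_{i,j}$, both available from the earlier lemma on $C$.
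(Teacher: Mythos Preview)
Your proof is correct and follows essentially the same strategy as the paper: expand $\partial^2(e_F)$ as a double sum, reduce to showing that the scalar $\widetilde{C}_{i,j}=C(m_{F\setminus\{i\}},p_i)^{-1}C(m_{F\setminus\{i,j\}},q_{i,j})^{-1}C(q_{i,j},p_i)$ is symmetric in $i$ and $j$, and appeal to the bicharacter properties of $C$. Your route is in fact a bit more direct than the paper's: you use the factorization $m_{F\setminus\{i\}}=m_{F\setminus\{i,j\}}*q_{i,j}$ to collapse $\widetilde{C}_{i,j}$ to the manifestly symmetric scalar $C\bigl(m_{F\setminus\{i,j\}},\tfrac{m_F}{m_{F\setminus\{i,j\}}}\bigr)^{-1}$, whereas the paper instead states and proves an auxiliary identity for four monomials $\mathbf{x}^{\boldsymbol{\alpha}},\mathbf{x}^{\boldsymbol{\beta}},\mathbf{x}^{\boldsymbol{\gamma}},\mathbf{x}^{\boldsymbol{\delta}}$ satisfying the relevant divisibilities and checks that the ratio $\widetilde{C}_{i,j}/\widetilde{C}_{j,i}$ equals $1$.
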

\begin{proof}
In order to prove $\mathbb{T}$ is a complex, we show $\partial \circ \partial =0$. It follows from the definition of the differential that
$$\partial^2 (e_F)= \sum_{i \in F} \sum_{j \in F\backslash{ \{i \}}} e_{F \backslash{ \{i,j\}}} (-1)^{\sigma(F,i) + \sigma(F\backslash\{i\},j)} \widetilde{C}_{i,j} \frac{m_F}{m_{F \backslash{\{i,j\}}}},$$ where

$$\widetilde{C}_{i,j}= C\left(m_{F\backslash{\{i,j\}}}, \frac{m_{F\backslash{\{i\}}}}{m_{F\backslash{\{i,j\}}}}\right)^{-1} C\left(m_{F\backslash{\{i\}}}, \frac{m_{F}}{m_{F\backslash{\{i\}}}}\right)^{-1} C\left(\frac{m_{F\backslash{\{i\}}}}{m_{F\backslash{\{i,j\}}}}, \frac{m_{F}}{m_{F\backslash{\{i\}}}}\right).$$
By the commutative case it suffices to show $\widetilde{C}_{i,j}=\widetilde{C}_{j,i}$. To show the equality $\widetilde{C}_{i,j}=\widetilde{C}_{j,i}$ we prove the following\\
Claim: if $\mathbf{x^{\boldsymbol{\alpha}}}, \mathbf{ x^{\boldsymbol{\beta}}}, \mathbf{ x^{\boldsymbol{\gamma}}}, \mathbf{x^{\boldsymbol{\delta}}}$ are monomials such that $\mathbf{x^{\boldsymbol{\delta}}} | \mathbf{x^{\boldsymbol{\beta}}}$, $\mathbf{x^{\boldsymbol{\delta}}} | \mathbf{x^{\boldsymbol{\gamma}}}$, $\mathbf{x^{\boldsymbol{\beta}}} | \mathbf{x^{\boldsymbol{\alpha}}}$, $\mathbf{x^{\boldsymbol{\gamma}}} | \mathbf{x^{\boldsymbol{\alpha}}}$, then $$C(\mathbf{x^{\boldsymbol{\beta}}}, \mathbf{x^{\boldsymbol{\alpha}-\boldsymbol{\beta}}})C(\mathbf{x^{\boldsymbol{\delta}}}, \mathbf{x^{\boldsymbol{\beta}-\boldsymbol{\delta}}})C(\mathbf{x^{\boldsymbol{\gamma}-\boldsymbol{\delta}}}, \mathbf{x^{\boldsymbol{\alpha}-\boldsymbol{\gamma}}}) = C(\mathbf{x^{\boldsymbol{\gamma}}}, \mathbf{x^{\boldsymbol{\alpha}-\boldsymbol{\gamma}}})C(\mathbf{x^{\boldsymbol{\delta}}}, \mathbf{x^{\boldsymbol{\gamma}-\boldsymbol{\delta}}})C(\mathbf{x^{\boldsymbol{\beta}-\boldsymbol{\delta}}}, \mathbf{x^{\boldsymbol{\alpha}-\boldsymbol{\beta}}}).$$
To prove the claim it suffices to show the following equality\\ $$\frac{C(\mathbf{x^{\boldsymbol{\beta}}},\mathbf{x^{\boldsymbol{\alpha}-\boldsymbol{\beta}}})}{C(\mathbf{x^{\boldsymbol{\beta-\delta}}}, \mathbf{x^{\boldsymbol{\alpha-\beta}}})} \frac{C(\mathbf{x^{\boldsymbol{\delta}}}, \mathbf{x^{\boldsymbol{\beta}-\boldsymbol{\delta}}})}{C(\mathbf{x^{\boldsymbol{\delta}}},\mathbf{x^{\boldsymbol{\gamma-\delta}}})} \frac{C(\mathbf{x^{\boldsymbol{\gamma}-\boldsymbol{\delta}}}, \mathbf{x^{\boldsymbol{\alpha}-\boldsymbol{\gamma}}})}{C(\mathbf{x^{\boldsymbol{\gamma}}},\mathbf{x^{\boldsymbol{\alpha-\gamma}}})}=1.$$
Indeed,
\begin{align*}
\frac{C(\mathbf{x^{\boldsymbol{\beta}}},\mathbf{x^{\boldsymbol{\alpha}-\boldsymbol{\beta}}})}{C(\mathbf{x^{\boldsymbol{\beta-\delta}}}, \mathbf{x^{\boldsymbol{\alpha-\beta}}})} \frac{C(\mathbf{x^{\boldsymbol{\delta}}}, \mathbf{x^{\boldsymbol{\beta}-\boldsymbol{\delta}}})}{C(\mathbf{x^{\boldsymbol{\delta}}},\mathbf{x^{\boldsymbol{\gamma-\delta}}})} \frac{C(\mathbf{x^{\boldsymbol{\gamma}-\boldsymbol{\delta}}}, \mathbf{x^{\boldsymbol{\alpha}-\boldsymbol{\gamma}}})}{C(\mathbf{x^{\boldsymbol{\gamma}}},\mathbf{x^{\boldsymbol{\alpha-\gamma}}})} &= C(\mathbf{x^{\boldsymbol{\delta}}},\mathbf{x^{\boldsymbol{\alpha-\beta}}})C(\mathbf{x^{\boldsymbol{\delta}}},\mathbf{x^{\boldsymbol{\beta-\gamma}}})C(\mathbf{x^{\boldsymbol{-\delta}}}, \mathbf{x^{\boldsymbol{\alpha-\gamma}}})\\
&= C(\mathbf{x^{\boldsymbol{\delta}}},\mathbf{x^{\boldsymbol{\alpha-\gamma}}})C(\mathbf{x^{\boldsymbol{\delta}}},\mathbf{x^{\boldsymbol{\alpha-\gamma}}})^{-1}\\
&=1.\qedhere\\
\end{align*}
\end{proof}

\begin{remark}\label{rem:FrankBook}
It is proved in \cite[Theorem 1.3.6]{FrankBook} that every monomial ideal in a commutative polynomial ring with coefficients in a commutative ring admits a unique set of minimal generators. This proof can be adapted to our context and it is therefore omitted. Moreover in \cite[Theorem 1.1.9]{FrankBook} it is proved that if a monomial belongs to a monomial ideal, then it is a multiple of a minimal generator of the monomial ideal. This proof can also be adapted to our context and it is therefore omitted. 
\end{remark}

If $m_1,\ldots,m_s$ is a sequence of monomials in $R$. The \emph{Taylor complex} of $m_1,\ldots,m_s$ is the complex $\mathbb{T}=(T_\bullet,\partial_\bullet)$ defined in Construction \ref{cons:Taylor}.

\begin{theorem}
Let $I$ be an ideal of $R$ generated by monomials $m_1,\ldots,m_s$, then $\mathbb{T}=(T_\bullet,\partial_\bullet)$ as defined above is a $\mathbb{Z}\times G$-graded free $R$-resolution of $R/I$.
\end{theorem}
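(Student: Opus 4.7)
The plan is to argue by induction on $s$, adapting the classical mapping-cone proof from the commutative case while tracking the bicharacter twists $C$. The base case $s=1$ is immediate: $\mathbb{T}$ reduces to $0 \to R \xrightarrow{\cdot m_1} R$, and since the skew polynomial ring $R$ is a domain, $m_1$ is a non-zero-divisor, so $\mathbb{T}$ resolves $R/(m_1)$.

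For the inductive step, set $I' = (m_1,\ldots,m_{s-1})$, so that $\mathbb{T}'$ resolves $R/I'$ by induction. Split $T_i = T'_i \oplus T''_i$ according to whether $s \notin F$ or $s \in F$. The subcomplex $T'_\bullet$ coincides with $\mathbb{T}'$, yielding a short exact sequence of complexes $0 \to \mathbb{T}' \to \mathbb{T} \to \mathbb{T}/\mathbb{T}' \to 0$. Let $m_i' := \mathrm{lcm}(m_i, m_s)/m_s$ for $i=1,\ldots,s-1$, and let $\mathbb{T}''$ be the Taylor complex of these monomials. I will show that the basis rescaling $\bar e_F \mapsto C(m_s, m'_{F\setminus\{s\}})\,\tilde e_{F\setminus\{s\}}$ identifies $\mathbb{T}/\mathbb{T}'$ with a shift of $\mathbb{T}''$. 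The key computation uses the factorization $m_{F\setminus\{i\}} = m_s * m'_{F\setminus\{i,s\}}$ in the monoid $X$ together with the bicharacter identity $C(m_s * u, v) = C(m_s, v)\,C(u, v)$: the extra factor $C(m_s, m'_F/m'_{F\setminus\{i\}})$ appearing in the induced quotient differential is exactly cancelled by the rescaling. In parallel, I will verify that $(I':m_s) = (m_1',\ldots,m_{s-1}')$; thanks to the fine $\mathbb{Z}^n$-grading on $R$ and the remark that a monomial lies in a monomial ideal only if it is a multiple of some generator, this reduces to the standard commutative argument.

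By the inductive hypothesis, $H_0(\mathbb{T}'') = R/(I':m_s)$ and $H_i(\mathbb{T}'') = 0$ for $i \geq 1$, so the long exact sequence in homology collapses to
\[
0 \to H_1(\mathbb{T}) \to R/(I':m_s) \xrightarrow{\delta} R/I' \to H_0(\mathbb{T}) \to 0,
\]
together with $H_i(\mathbb{T}) = 0$ for $i \geq 2$. A direct diagram chase---lifting $\tilde e_\emptyset$ to $e_{\{s\}} \in T_1$ and computing $\partial(e_{\{s\}}) = e_\emptyset\,m_s$---identifies $\delta$ with multiplication by $m_s$ up to a unit, which is injective by the very definition of the colon ideal. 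Hence $H_1(\mathbb{T}) = 0$ and $\mathrm{coker}(\delta) = R/(I'+(m_s)) = R/I$, giving $H_0(\mathbb{T}) = R/I$. The main obstacle is the verification that the basis rescaling converts the induced quotient differential into the Taylor differential of the $m_i'$ on the nose; this is essentially a bookkeeping computation, but it is the one place where the skew coefficients could in principle have obstructed the mapping-cone approach, and it hinges on the bicharacter multiplicativity established in the previous lemma.
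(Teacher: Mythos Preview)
Your proposal is correct and follows essentially the same route as the paper: induction on $s$, the short exact sequence $0\to\mathbb{T}'\to\mathbb{T}\to\mathbb{T}/\mathbb{T}'\to 0$, identification of the quotient with a shift of the Taylor complex on $v_i=\mathrm{lcm}(m_i,m_s)/m_s$ via a diagonal rescaling, and injectivity of the connecting map as multiplication by $m_s$. One small caution: the paper's rescaling is $e_{F\cup\{s\}}\mapsto (-1)^{|F|}C(m_s,v_F)^{-1}e_F$, whereas your stated factor $C(m_s,m'_{F\setminus\{s\}})$ omits the sign and has the inverse power of $C$; when you carry out the bookkeeping you will find both adjustments are forced by the chain-map condition (the sign absorbs the shift, and the bicharacter identity gives $C(m_s,v_G)^{-1}=C(m_s,v_{G\setminus\{i\}})^{-1}C(m_s,v_G/v_{G\setminus\{i\}})^{-1}$, matching the extra factor in the quotient differential).
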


\begin{proof}
We prove this by inducting on $s$, with $s=1$ being clear. Assume $s>1$ and assume that the statement holds true for the sequence $m_1, \ldots, m_{s-1}$. Let $\T'$ be equal to the Taylor complex of the sequence of monomials $m_1 \ldots, m_{s-1}$, then $\T'$ is a subcomplex of $\T$ where $$\T'= \bigwedge^{i}R^{s-1} \hooklongrightarrow \bigwedge^{i}R^{s}$$ $$e_{j_{1}} \wedge \cdots \wedge e_{j_{i}} \mapsto e_{j_{1}} \wedge \cdots \wedge e_{j_{i}}$$
with $j_{l}=1, \ldots, s-1$. Consider $\mathbb{G}= \T/\T'$, then $G_{0}=0$ and $G_{i}=T_{i}/T'_{i}$ for $i>0$ is a free module generated by the basis $e_{F \cup \{s\}}$ where $|F|=i-1$ and $F \subseteq [s-1]$. The differential on $\mathbb{G}$ is given by $$\partial(e_{F \cup \{s\}})= \sum_{i \in F} e_{F\backslash\{i\} \cup \{s\} } (-1)^{\sigma(F,i)} C\left(m_{F\backslash\{i\} \cup \{s\}}, \frac{m_{F \cup \{s\}}}{m_{F\backslash\{i\} \cup \{s\}}} \right)^{-1} \frac{m_{F \cup \{s\}}}{m_{F\backslash\{i\} \cup \{s\}}}.$$ 

We prove $\mathbb{G}$ is isomorphic to the Taylor complex shifted by $1$ on the sequence of monomials $v_1, \ldots, v_{s-1}$ where $v_i= \mathrm{lcm}(m_i,m_s)/m_s $ through the isomorphism $\varphi$ defined by
$$\varphi(e_{F \cup \{s\}})= e_{F}(-1)^{|F|}C \left( m_s, v_{F} \right)^{-1}.$$
The map $\varphi$ is clearly bijective, it remains to show that is is a chain map. Indeed, 
\begin{align*}
    \varphi \partial^{\mathbb{G}}(e_{F} \cup \{s\}) &= \varphi \left(\sum_{i \in F} e_{F\backslash \{i\} \cup \{s\}} (-1)^{\sigma(F,i)} C\left( m_{F\backslash \{i\} \cup \{s\}}, \frac{m_{F \cup \{s\}}}{m_{F\backslash \{i\} \cup \{s\}}} \right)^{-1} \frac{m_{F \cup \{s\}}}{m_{F\backslash \{i\} \cup \{s\}}} \right)\\
    &= \sum_{i \in F} e_{F\backslash \{i\}}(-1)^{\sigma(i,F)+|F\backslash \{i\}|}C \left( m_s, v_{F \backslash \{i\}} \right)^{-1}C\left( m_{F\backslash \{i\} \cup \{s\}}, \frac{m_{F \cup \{s\}}}{m_{F\backslash \{i\} \cup \{s\}}}\right)^{-1} \frac{m_{F \cup \{s\}}}{m_{F\backslash \{i\} \cup \{s\}}}\\  
\end{align*}
and, denoting by $\mathbb{V}$ the shift of the Taylor complex on $v_1,\ldots,v_{s-1}$,
\begin{align*}
    \partial^{\mathbb{V}}\varphi(e_{F \cup \{s\}}) &= \partial^{\mathbb{V}} \left( e_{F}(-1)^{|F|}C \left(m_s, v_{F} \right)^{-1} \right)\\
    &=\sum_{i \in F} e_{F\backslash \{i\}}(-1)^{\sigma(F,i)+ |F|+1}C\left(v_{F\backslash\{i\}}, \frac{v_F}{v_{F\backslash\{i\}}} \right)^{-1}C\left(m_s, v_{F} \right)^{-1} \frac{v_F}{v_{F\backslash \{i\}}}.\\
\end{align*}
Thus, in order to show the isomorphism commutes with the differential we need to show the monomials and constants match. We start by proving the monomials are the same. Let $m_i=\mathbf{x}^{\boldsymbol{\alpha}_{i}}$ then $$m_{F \cup \{s\}}= \mathbf{x}^{\mathrm{max}_{i \in F \cup\{s\}}\boldsymbol{\alpha}_{i}}.$$ Thus, $$\frac{m_{F \cup \{s\}}}{m_{s}}= \mathbf{x}^{\mathrm{max}_{i \in F \cup \{s\}}(\boldsymbol{\alpha}_{i}-\boldsymbol{\alpha}_{s})}.$$ Therefore
\begin{align*}
v_{i}= \frac{m_{\{i,s\}}}{m_{s}}&= \mathbf{x}^{\mathrm{max}(\boldsymbol{\alpha}_{i},\boldsymbol{\alpha}_{s})-\boldsymbol{\alpha}_s}\\
&= \mathbf{x}^{\mathrm{max}(\boldsymbol{\alpha}_{i}-\boldsymbol{\alpha}_{s},0)}.\\
\end{align*}
It follows that
\begin{align*}
v_{F}&= \mathbf{x}^{\mathrm{max}_{i \in F}(\boldsymbol{\alpha}_{i}-\boldsymbol{\alpha}_{s}, 0)}\\
&=\mathbf{x}^{\mathrm{max}_{i \in F \cup \{s\}}(\boldsymbol{\alpha}_{i}-\boldsymbol{\alpha}_{s})}.\\
\end{align*}
Thus, $v_{F}= \frac{m_{F \cup \{s\}}}{m_{s}}$ which implies that the monomials appearing in $\varphi\partial^{\mathbb{G}}(e_F\cup\{s\})$ and $\partial^{\mathbb{V}}\varphi(e_{F\cup\{s\}})$ are the same.
Next we show the constants match. Indeed, $$\frac{C \left(m_{s},\frac{m_{F \cup \{s\}}}{m_{s}}\right)}{C\left(m_{s},\frac{m_{F\backslash \{i\} \cup \{s\}}}{m_{s}}\right)} \frac{C \left( \frac{m_{F \{i\} \cup \{s\}}}{m_{s}}, \frac{m_{F \cup \{s\}}}{m_{F\backslash\{i\} \cup \{s\}}}\right)}{C\left( m_{F\backslash\{i\} \cup \{s\}}, \frac{m_{F \cup \{s\}}}{m_{F\backslash\{i\} \cup \{s\}}}\right)}= C\left( m_{s}, \frac{m_{F \cup \{s\}}}{m_{F\backslash\{i\} \cup \{s\}}}\right)C \left( m_{s}, \frac{m_{F \cup \{s\}}}{m_{F\backslash\{i\} \cup \{s\}}} \right)^{-1}=1.$$
Since $\mathbb{G}$ is isomorphic to the shift of the Taylor complex by $1$ on the sequence of monomials $v_1, \ldots, v_{s-1}$, by induction $H_{i}(\mathbb{G})=0$ when $i \neq 1$ and $H_{1}(\mathbb{G})=R/(v_{1}, \ldots, v_{s-1})$. The exact sequence induced in homology by the exact sequence $$0 \rightarrow \T' \rightarrow \T \rightarrow \mathbb{G} \rightarrow 0$$ yields $H_{i}(\T)=0$ when $i>1$, and there is an exact sequence  $$0 \rightarrow H_{1}(\T') \rightarrow H_{1}(\T) \rightarrow H_{1}(\mathbb{G}) \xrightarrow{\eth} H_{0}(\T') \rightarrow H_{0}(\T) \rightarrow 0.$$ To conclude the proof we prove that the connecting homomorphism $\eth: H_{1}(\mathbb{G}) \rightarrow H_{0}(\T')$ is injective. We use bars to denote residue classes modulo ideals. A computation shows $\eth(\Bar{r})=\overline{m_{s}r}$, therefore it suffices to prove that if $m_{s}r \in (m_1, \ldots, m_{s-1})$ then $r \in (v_1, \ldots, v_{s-1})$. Without loss of generality we may assume $r$ is a monomial. The claim then follows from the fact that if a monomial belongs to a monomial ideal it must be a multiple of a minimal generator of the monomial ideal, see Remark \ref{rem:FrankBook}. 
\end{proof}

\begin{definition}
The \emph{skew Taylor resolution} of $R/I$ where $I$ is a monomial ideal of $R$ is the resolution $\mathbb{T}=(T_\bullet,\partial_\bullet)$.
\end{definition}

We recall that if $M$ is a graded $R$-module, then $\beta^R_i(M)$ denotes the rank of the free module in cohomological degree $i$ in a minimal (graded) free $R$-resolution of $M$. 

\begin{corollary}
Let $I$ be a monomial ideal of $R$ minimally generated by $s$ elements, then
\[
\beta^R_i(R/I)\leq\binom{s}{i}.
\]
\end{corollary}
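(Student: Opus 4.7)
The plan is essentially immediate from the preceding theorem. Since $I$ is minimally generated by $s$ monomials $m_1,\ldots,m_s$, the skew Taylor resolution $\mathbb{T}=(T_\bullet,\partial_\bullet)$ built from these generators is a $\mathbb{Z}\times G$-graded free $R$-resolution of $R/I$, with $T_i=\bigwedge^{i} R^s$ free of rank $\binom{s}{i}$ by \cref{cons:Taylor}.

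The argument then reduces to the standard fact that the minimal free resolution is a direct summand (up to isomorphism of complexes) of any free resolution, so the rank of the $i$-th module in any free resolution is an upper bound for $\beta_i^R(R/I)$. Concretely, I would invoke that the graded Betti number $\beta_i^R(R/I)$ equals $\dim_\kk \mathrm{Tor}_i^R(R/I,\kk)$ (which one can compute from any free resolution, in particular $\mathbb{T}\otimes_R \kk$), and this dimension is at most the rank of $T_i$.

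There is really no obstacle here; the only mild subtlety worth noting is that we are in a skew/noncommutative setting, so I would be careful to cite that Betti numbers are well-defined in this $\mathbb{Z}\times G$-graded color setting via the $\mathrm{Tor}$ computation, which needs only that $\kk=R/(x_1,\ldots,x_n)$ is a well-defined color $R$-module. The conclusion
\[
\beta_i^R(R/I)\;\leq\;\mathrm{rank}_R(T_i)\;=\;\binom{s}{i}
\]
then follows directly.
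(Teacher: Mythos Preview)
Your proposal is correct and is exactly the argument the paper has in mind: the corollary is stated without proof because it follows immediately from the existence of the Taylor resolution with $T_i$ of rank $\binom{s}{i}$, together with the standard fact (recalled just before the corollary) that Betti numbers are the ranks in a minimal free resolution and hence bounded by the ranks in any free resolution.
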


\section{A Color DG Structure on the Taylor Resolution}\label{sec:DGA}
Color differential graded (DG) algebras have been introduced in \cite[Definition 4.1]{FM} by the first and third author. They have been used to study the homological properties of quotients of skew polynomial rings by ideals generated by homogeneous normal elements. A DG algebra $A$ over the skew polynomial ring $R=\kk_\mathfrak{q}[\mathbf{x}]$ is a bigraded unital associative $\kk$-algebra with $R\subseteq A_0$ equipped with a graded $R$-linear differential $\partial$ of homological degree $-1$ such that $\partial^2=0$, and such that the Leibniz rule holds:
\[
\partial(ab)=\partial(a)b+(-1)^{|a|}a\partial(b),
\]
where $a$ and $b$ are bihomogeneous elements.

The DG algebra $A$ is said to be a color DG algebra if $A$ has a $G$-grading compatible with the bigrading of $A$ and such that $\partial$ is $G$-homogeneous of $G$-degree $1_G$ (the identity of $G$).

We say that a color DG algebra $A$ is graded color commutative if for all trihomogeneous elements $x,y\in A$ one has
\[
xy=(-1)^{|x||y|}\chi(x,y)yx,
\]
where $\chi(x,y)$ is the usual abuse of notation, and $x^2=0$ if the homological degree of $x$ is odd.
\begin{theorem}\label{thm:DG}

The Taylor complex has a color commutative differential graded algebra structure given by the product 
\[ e_{V}e_{W}=
\begin{dcases}
e_{V \cup W}(-1)^{\sigma(V,W)}\frac{m_{V}*m_{W}}{m_{V \cup W}}C\left( m_{V}, m_{W} \right) C\left( m_{V \cup W}, \frac{m_{V}*m_{W}}{m_{V \cup W}} \right)^{-1} & V\cap W = \emptyset\\
0 & V\cap W \neq \emptyset\\
\end{dcases}
\]
where $\sigma(V, W)= |\{(i,j) \in V \times W \mid j<i\}|$.
\end{theorem}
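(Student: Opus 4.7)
The plan is to verify the four axioms of a color commutative DG algebra: unitality with $e_\emptyset$ as the identity, associativity, graded color commutativity (together with $e_V^2=0$ for $|V|$ odd), and the Leibniz rule. Unitality is immediate from the formula since $m_\emptyset=1$ forces every factor to become trivial when $V=\emptyset$ or $W=\emptyset$; and $e_V^2=0$ in odd homological degree is built into the case distinction, because the product vanishes whenever $V\cap W\neq\emptyset$.

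For graded color commutativity, I would note that $V\cup W=W\cup V$ and $m_V\ast m_W=m_W\ast m_V$, so the only factors sensitive to swapping $V$ and $W$ are the sign $(-1)^{\sigma(V,W)}$ and the scalar $C(m_V,m_W)$. When $V\cap W=\emptyset$ every ordered pair $(v,w)\in V\times W$ satisfies exactly one of $v<w$ or $w<v$, hence $\sigma(V,W)+\sigma(W,V)=|V||W|$. Together with part (c) of the preceding lemma, $C(m_V,m_W)/C(m_W,m_V)=\chi(m_V,m_W)$, this immediately produces the required relation $e_Ve_W=(-1)^{|V||W|}\chi(e_V,e_W)\,e_We_V$.

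For associativity, I would expand $(e_Ue_V)e_W$ and $e_U(e_Ve_W)$ directly from the product formula. Both vanish unless $U,V,W$ are pairwise disjoint; in that case both expansions are supported on $e_{U\cup V\cup W}$ with monomial factor $(m_U\ast m_V\ast m_W)/m_{U\cup V\cup W}$. The sign comparison reduces to the bilinearity $\sigma(A\cup B,C)=\sigma(A,C)+\sigma(B,C)$ for disjoint $A,B$, so that both signs equal $(-1)^{\sigma(U,V)+\sigma(U,W)+\sigma(V,W)}$. The $C$-factor comparison is a routine bicharacter identity obtained from parts (a) and (b) of the lemma: expand every $C$ using $C(\mathbf{x}^{\boldsymbol{\alpha}},\mathbf{x}^{\boldsymbol{\beta}})=\prod_{i>j}\chi(x_i,x_j)^{\alpha_i\beta_j}$ and check that the exponent of each $\chi(x_i,x_j)$ agrees on the two sides.

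The Leibniz rule is the main obstacle and deserves the most careful accounting. Writing $\partial(e_Ve_W)$ using the product formula together with the differential, and $\partial(e_V)e_W+(-1)^{|V|}e_V\partial(e_W)$ using the definition of $\partial$, both sides decompose as sums over $k\in V\cup W$, which naturally split into the cases $k\in V$ and $k\in W$. The signs are reconciled using $\sigma(V\cup W,k)=\sigma(V,k)+|\{w\in W:w<k\}|$ for $k\in V$, its mirror for $k\in W$, and the Leibniz twist $(-1)^{|V|}$. The monomial factor $m_{V\cup W}/m_{(V\cup W)\setminus\{k\}}$ matches the corresponding ratio coming from $\partial(e_V)e_W$ after folding in $(m_V\ast m_W)/m_{V\cup W}$. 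The nontrivial step is matching the $C$-scalars term-by-term, which produces a bicharacter identity of the same flavor as the one used in the $\partial^2=0$ calculation; the cleanest way to finish is again to expand everything via $C(\mathbf{x}^{\boldsymbol{\alpha}},\mathbf{x}^{\boldsymbol{\beta}})=\prod_{i>j}\chi(x_i,x_j)^{\alpha_i\beta_j}$ and compare the exponent of each $\chi(x_i,x_j)$ on the two sides.
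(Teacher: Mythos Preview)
Your outline is correct and matches the paper's overall strategy: verify each axiom by direct computation, separating sign, monomial, and scalar ($C$-factor) contributions. Two tactical differences are worth noting. First, the paper does not re-derive the sign and monomial identities but simply appeals to the known commutative Taylor DG structure of Gemeda, whereas you check them from scratch via the combinatorics of $\sigma$; your route is more self-contained. Second, for the $C$-scalar identities the paper manipulates them purely via the bicharacter axiom (multiplicativity in each slot, together with $\chi=C/C^{\mathrm{op}}$ from part~(c) of the lemma), collapsing each side to a common closed form such as $C(m_V,m_W)\,C\bigl(m_{V\cup W\setminus\{i\}},\tfrac{m_V\ast m_W}{m_{V\cup W\setminus\{i\}}}\bigr)^{-1}$, rather than expanding to exponents of $\chi(x_i,x_j)$ as you propose; both work, but the paper's manipulations are shorter. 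One bookkeeping point your sketch elides: in the associativity and Leibniz computations an extra $\chi$ factor appears each time a monomial coefficient must be commuted past a basis element $e_W$ (coming from the twisted bimodule rule $rm=\chi(r,m)mr$); this is invisible in the commutative case and would be absorbed into your exponent expansion, but it must be recorded before the scalar comparison can begin.
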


\begin{proof}
We start by showing the product satisfies the Leibniz rule. We focus on the case $V\cap W=\emptyset$, leaving the other case to the reader.
We first compute $\partial(e_Ve_W)$,
\begin{align*}
    \partial ( e_{V}e_{W})&=\partial \left( e_{V \cup W}(-1)^{\sigma(V,W)}\frac{m_{V}*m_{W}}{m_{V \cup W}}C\left( m_{V}, m_{W} \right) C\left( m_{V \cup W}, \frac{m_{V}*m_{W}}{m_{V \cup W}} \right)^{-1} \right) \\
    &=\sum_{i \in V \cup W}e_{V \cup W \backslash\ii}(-1)^{\sigma(V,W)+ \sigma(V\cup W, i)}\frac{m_{V \cup W}}{m_{V \cup W \backslash \ii}} \frac{m_{V}*m_{W}}{m_{V \cup W}}\widetilde{C}_{V,W,i}\\
    &=\sum_{i \in V \cup W}e_{V \cup W \backslash\ii}(-1)^{\sigma(V,W)+ \sigma(V\cup W, i)} \frac{m_{V}*m_{W}}{m_{V \cup W \backslash \ii}} C\lp \frac{m_{V \cup W}}{m_{V \cup W \backslash \ii}}, \frac{m_{V}*m_{W}}{m_{V \cup W}}\rp\widetilde{C}_{V,W,i},\\
\end{align*}
where $\widetilde{C}_{V,W,i}=C\lp m_{V \cup W \backslash \ii}, \frac{m_{V \cup W}}{m_{V \cup W \backslash \ii}}\rp^{-1} C \lp m_{V}, m_{W}\rp C\lp m_{V \cup W}, \frac{m_{V}*m_{W}}{m_{V \cup W}} \rp^{-1}$.\\ \indent Next we compute $\partial(e_{V})e_{W}+ (-1)^{|W|}e_{V}\partial(e_{W})=$
\begin{align*}
    &\sum_{i \in V}e_{V \backslash \ii}(-1)^{\sigma(V,i)}C \lp m_{V \backslash \ii}, \frac{m_V}{m_{V\backslash \ii}} \rp^{-1}\frac{m_V}{m_{V \backslash \ii}}e_W \\
    & \indent + e_{V}\sum_{i\in W}e_{W \backslash \ii}(-1)^{|W| + \sigma(W,i)}C \lp m_{W \backslash \ii}, 
\frac{m_W}{m_{W \backslash\ii}}\rp^{-1} \frac{m_W}{m_{W \backslash\ii}}\\
&= \sum_{i \in V}e_{V \backslash \ii}e_{W}(-1)^{\sigma(V,i)} \frac{\chi(\frac{m_V}{m_{V \backslash \ii}}, m_{W})}{C \lp m_{V \backslash \ii}, \frac{m_V}{m_{V\backslash \ii}} \rp} \frac{m_V}{m_{V \backslash \ii}} \\
& \indent +\sum_{i\in W}e_{V}e_{W \backslash \ii}(-1)^{|W| + \sigma(W,i)}C \lp m_{W \backslash \ii}, 
\frac{m_W}{m_{W \backslash\ii}}\rp^{-1} \frac{m_W}{m_{W \backslash\ii}}\\
&= \sum_{i\in V}e_{V \backslash \ii 
    \cup W}(-1)^{\sigma(V,i)} \frac{m_{V\backslash \ii}*m_{W}}{m_{V\backslash \ii \cup W}}\frac{m_V}{m_{V \backslash \ii}} \frac {C \lp m_{V \backslash \ii}, m_{W}\rp}{ C \lp m_{V \backslash \ii \cup W}, \frac{m_{V \backslash \ii}*m_{W}}{m_{V\backslash \ii \cup W}} \rp} \frac{\chi(\frac{m_V}{m_{V \backslash \ii}}, m_{W})}{C \lp m_{V \backslash \ii}, \frac{m_V}{m_{V\backslash \ii}} \rp}\\
& \indent + \sum_{i\in W}e_{W \backslash \ii 
\cup V}(-1)^{|W| + \sigma(W,i)} \frac{m_{W\backslash \ii}*m_{V}}{m_{W\backslash \ii \cup V}} \frac{m_W}{m_{W \backslash \ii}} \frac{C \lp m_{V}, m_{W \backslash \ii} \rp}{C \lp m_{W \backslash \ii}, \frac{m_W}{m_{W \backslash \ii}} \rp} C \lp m_{V \cup W \backslash \ii}, \frac{m_{V}*m_{W \backslash \ii}}{m_{V \cup W \backslash
\ii}} \rp ^{-1}\\
&= \sum_{i\in V}e_{V \backslash \ii 
\cup W}(-1)^{\sigma(V,i)} \frac{m_{V}*m_{W}}{m_{V\backslash \ii \cup W}} \frac{C \lp \frac{m_{V\backslash\ii}*m_{W}}{m_{V \backslash\ii \cup W}}, \frac{m_{V}}{m_{V \backslash \ii}}\rp}{C \lp m_{V \backslash \ii \cup W}, \frac{m_{V \backslash \ii}*m_{W}}{m_{V \backslash\ii \cup W}} \rp} \frac{C \lp \frac{m_{V}}{m_{V \backslash\ii}}, m_{W}\rp}{C\lp m_{W}, \frac{m_{V}}{m_{V\backslash \ii}}\rp} \frac{C\lp m_{V \backslash \ii}, m_{W} \rp}{C \lp m_{V \backslash \ii}, \frac{m_{V}}{m_{V\backslash \ii}}\rp}\\
& \indent + \sum_{i\in W}e_{W \backslash \ii 
\cup V}(-1)^{|W| + \sigma(W,i)} \frac{m_{V}*m_{W}}{m_{W \backslash \ii \cup V}} \frac{C \lp \frac{m_{W\backslash \ii}*m_{V}}{m_{W\backslash \ii \cup V}}, \frac{m_W}{m_{W \backslash \ii}} \rp}{C \lp m_{V \cup W \backslash \ii}, \frac{m_{V}*m_{W \backslash \ii}}{m_{V \cup W \backslash
\ii}} \rp} \frac{C \lp m_{V}, m_{W \backslash \ii} \rp}{C \lp m_{W \backslash \ii}, \frac{m_W}{m_{W \backslash \ii}} \rp}.\\
\end{align*}

Ignoring the signs and the monomials, which will be the same by the commutative case, it suffices to show that for $i \in V$ one has 
\[
C\lp \frac{m_{V \cup W}}{m_{V \cup W \backslash \ii}}, \frac{m_{V}*m_{W}}{m_{V \cup W}}\rp\widetilde{C}_{V,W,i}= \frac{C \lp \frac{m_{V\backslash\ii}*m_{W}}{m_{V \backslash\ii \cup W}}, \frac{m_{V}}{m_{V \backslash \ii}}\rp}{C \lp m_{V \backslash \ii \cup W}, \frac{m_{V \backslash \ii}*m_{W}}{m_{V \backslash\ii \cup W}} \rp} \frac{C \lp \frac{m_{V}}{m_{V \backslash\ii}}, m_{W}\rp}{C\lp m_{W}, \frac{m_{V}}{m_{V\backslash \ii}}\rp} \frac{C\lp m_{V \backslash \ii}, m_{W} \rp}{C \lp m_{V \backslash \ii}, \frac{m_{V}}{m_{V\backslash \ii}}\rp}
\]
and for $i \in W$, one has $$ C\lp \frac{m_{V \cup W}}{m_{V \cup W \backslash \ii}}, \frac{m_{V}*m_{W}}{m_{V \cup W}}\rp 
\widetilde{C}_{V,W,i}= \frac{C \lp \frac{m_{W\backslash \ii}*m_{V}}{m_{W\backslash \ii \cup V}} \frac{m_W}{m_{W \backslash \ii}} \rp}{C \lp m_{V \cup W \backslash \ii}, \frac{m_{V}*m_{W \backslash \ii}}{m_{V \cup W \backslash
\ii}} \rp} \frac{C \lp m_{V}, m_{W \backslash \ii} \rp}{C \lp m_{W \backslash \ii}, \frac{m_W}{m_{W \backslash \ii}} \rp}.$$

First we will show $C\lp \frac{m_{V \cup W}}{m_{V \cup W \backslash \ii}}, \frac{m_{V}*m_{W}}{m_{V \cup W}}\rp\widetilde{C}_{V,W,i}= \frac{C \lp \frac{m_{V\backslash\ii}*m_{W}}{m_{V \backslash\ii \cup W}}, \frac{m_{V}}{m_{V \backslash \ii}}\rp}{C \lp m_{V \backslash \ii \cup W}, \frac{m_{V \backslash \ii}*m_{W}}{m_{V \backslash\ii \cup W}} \rp} \frac{C \lp \frac{m_{V}}{m_{V \backslash\ii}}, m_{W}\rp}{C\lp m_{W}, \frac{m_{V}}{m_{V\backslash \ii}}\rp} \frac{C\lp m_{V \backslash \ii}, m_{W} \rp}{C \lp m_{V \backslash \ii}, \frac{m_{V}}{m_{V\backslash \ii}}\rp}$. Indeed, 
\begin{align*}
    C\lp \frac{m_{V \cup W}}{m_{V \cup W \backslash \ii}}, \frac{m_{V}*m_{W}}{m_{V \cup W}}\rp\widetilde{C}_{V,W,i} &= \frac{C \lp \frac{m_{V \cup W}}{m_{V \cup W \backslash \ii}}, \frac{m_{V}*m_{W}}{m_{V \cup W}}\rp}{C\lp m_{V \cup W},\frac{m_{V}*m_{W}}{m_{V \cup W}} \rp}\frac{C\lp m_{V}, m_{W}\rp}{C\lp m_{V \cup W \backslash\ii}, \frac{m_{V \cup W}}{m_{V \cup W \backslash\ii}} \rp}\\
    &= C \lp \frac{1}{ m_{V \cup W \backslash \ii}}, \frac{m_{V}*m_{W}}{m_{V \cup W}} \rp C \lp \frac{1}{m_{V \cup W \backslash \ii}}, \frac{m_{V \cup W}}{m_{V \cup W \backslash \ii}} \rp C\lp m_{V}, m_{W}\rp\\
    &= C\lp m_{V \cup W \backslash \ii}, \frac{m_{V}*m_{W}}{m_{V \cup W \backslash \ii}} \rp^{-1} C\lp m_{V}, m_{W} \rp,\\
\end{align*}
and
\begin{align*}
    & \frac{C \lp \frac{m_{V\backslash\ii}*m_{W}}{m_{V \backslash\ii \cup W}}, \frac{m_{V}}{m_{V \backslash \ii}}\rp}{C \lp m_{V \backslash \ii \cup W}, \frac{m_{V \backslash \ii}*m_{W}}{m_{V \backslash\ii \cup W}} \rp} \frac{C \lp \frac{m_{V}}{m_{V \backslash\ii}}, m_{W}\rp}{C\lp m_{W}, \frac{m_{V}}{m_{V\backslash \ii}}\rp} \frac{C\lp m_{V \backslash \ii}, m_{W} \rp}{C \lp m_{V \backslash \ii}, \frac{m_{V}}{m_{V\backslash \ii}}\rp}\\ 
    &= \frac{C \lp \frac{m_{V\backslash\ii}*m_{W}}{m_{V \backslash\ii \cup W}}, \frac{m_{V}}{m_{V \backslash \ii}}\rp}{C \lp m_{V \backslash \ii \cup W}, \frac{m_{V \backslash \ii}*m_{W}}{m_{V \backslash\ii \cup W}} \rp} \frac{C \lp m_{V}, m_{W} \rp}{C \lp m_{W}*m_{V \backslash \ii}, \frac{m_{V}}{m_{V \backslash \ii}} \rp}\\
    &= \frac{C \lp m_{V \backslash \ii \cup W}, \frac{m_{V}}{m_{V \backslash \ii}} \rp^{-1}}{C \lp m_{V\backslash \ii \cup W}, \frac{m_{V \backslash \ii}*m_{W}}{m_{V \backslash \ii \cup W}} \rp} C\lp m_{V}, m_{W}\rp\\
    &= C\lp m_{V \cup W \backslash \ii}, \frac{m_{V}*m_{W}}{m_{V \cup W \backslash \ii}} \rp^{-1} C\lp m_{V}, m_{W} \rp.\\
\end{align*}
Next, we show $ C\lp \frac{m_{V \cup W}}{m_{V \cup W \backslash \ii}}, \frac{m_{V}*m_{W}}{m_{V \cup W}}\rp
\widetilde{C}_{V,W,i}= \frac{C \lp \frac{m_{W\backslash \ii}*m_{V}}{m_{W\backslash \ii \cup V}} \frac{m_W}{m_{W \backslash \ii}} \rp}{C \lp m_{V \cup W \backslash \ii}, \frac{m_{V}*m_{W \backslash \ii}}{m_{V \cup W \backslash
\ii}} \rp} \frac{C \lp m_{v}, m_{W \backslash \ii} \rp}{C \lp m_{W \backslash \ii}, \frac{m_W}{m_{W \backslash \ii}} \rp}$. Indeed,
\begin{align*}
    &\frac{C \lp \frac{m_{V}*m_{W \backslash \ii}}{m_{V \cup W \backslash \ii}}, \frac{m_{W}}{m_{W \backslash \ii}} \rp}{C \lp m_{W \backslash \ii}, \frac{m_{W}}{m_{W \backslash \ii}} \rp} \frac{C \lp m_{V}, m_{W \backslash \ii}\rp}{C \lp m_{V \cup W \backslash \ii}, \frac{m_{V}*m_{W \backslash\ii}}{m_{V \cup W \backslash \ii}} \rp}\\
    & \indent =  C \lp \frac{m_{V \cup W \backslash \ii}}{m_{V}}, \frac{m_{W}}{m_{W \backslash \ii}}\rp^{-1} \frac{C \lp m_{V}, m_{W \backslash \ii} \rp}{C \lp m_{V \cup W \backslash \ii}, \frac{m_{V}*m_{W \backslash \ii}}{m_{V \cup W \backslash \ii}}\rp}\\
    & \indent = C\lp m_{V}, m_{W} \rp C \lp m_{V}, m_{W \backslash \ii} \rp^{-1} C\lp m_{V \cup W \backslash \ii}, \frac{m_{W}}{m_{W \backslash \ii}} \rp^{-1} \frac{C\lp m_{V}, m_{W \backslash\ii}\rp}{C \lp m_{V \cup W \backslash \ii}, \frac{m_{V}*m_{W \backslash\ii}}{m_{V \cup W \backslash \ii}} \rp}\\
    & \indent = C\lp m_{V}, m_{W} \rp C\lp m_{V \cup W \backslash \ii}, \frac{m_{V}*m_{W}}{m_{V \cup W \backslash \ii}} \rp^{-1}.
\end{align*}
Thus, the Leibniz rule holds for this product.\\
\indent Now we prove the associativity of this product. In the following computations we will be ignoring the signs and the monomials, which will be the same by the commutative case. For the product $\lp e_{V}e_{W}\rp e_{X}$ we have constants,
\begin{align*}
   & C\lp m_{V \cup W}, m_{X}\rp \chi\lp\frac{m_{V}*m_{W}}{m_{V \cup W}}, m_{X} \rp \frac{C\lp m_{V},m_{W} \rp}{C \lp m_{V \cup W \cup X}, \frac{m_{V \cup W}*m_{X}}{m_{V \cup W \cup X}} \rp} \frac{C\lp \frac{m_{V \cup W}*m_{X}}{m_{V \cup W \cup X}}, \frac{m_{V}*m_{W}}{m_{V \cup W}} \rp}{C\lp m_{V \cup W}, \frac{m_{V}*m_{W}}{m_{V \cup W}} \rp}\\
   &= C\lp m_{V \cup W}, m_{X}\rp \frac{C\lp \frac{m_{V}*m_{W}}{m_{V \cup W}}, m_{X}\rp}{C\lp m_{X}, \frac{m_{V}*m_{W}}{m_{V \cup W}} \rp} \frac{C\lp m_{V},m_{W} \rp}{C \lp m_{V \cup W \cup X}, \frac{m_{V \cup W}*m_{X}}{m_{V \cup W \cup X}} \rp} C\lp \frac{m_{X}}{m_{V \cup W \cup X}},\frac{m_{V}*m_{W}}{m_{V \cup W}} \rp\\
   &= C\lp m_{V}*m_{W}, m_{X}\rp C\lp \frac{1}{m_{X}}, \frac{m_{V}*m_{W}}{m_{V \cup W}} \rp \frac{C\lp m_{V},m_{W} \rp}{C \lp m_{V \cup W \cup X}, \frac{m_{V \cup W}*m_{X}}{m_{V \cup W \cup X}} \rp} C\lp \frac{m_{X}}{m_{V \cup W \cup X}},\frac{m_{V}*m_{W}}{m_{V \cup W}} \rp\\
   &=  C\lp m_{V}*m_{W}, m_{X}\rp C\lp m_{V}, m_{W}\rp C\lp m_{V \cup W \cup X}, \frac{m_{V \cup W}*m_{X}}{m_{V \cup W \cup X}} \rp^{-1} C\lp m_{V \cup W \cup X}, \frac{m_{V}*m_{W}}{m_{V \cup W}} \rp^{-1}\\
   &= C\lp m_{V}*m_{W}, m_{X}\rp C\lp m_{V}, m_{W}\rp C\lp m_{V \cup W \cup X}, \frac{m_{V}*m_{W}*m_{X}}{m_{V \cup W \cup X}} \rp^{-1}.\\
\end{align*}
Expanding the product $e_{V}\lp e_{W}e_{X}\rp$ we have constants,
\begin{align*}
   & C\lp m_{W}, m_{X} \rp \frac{C\lp m_{V}, m_{W\cup X} \rp}{C \lp m_{V\cup W\cup X}, \frac{m_{V}*m_{W\cup X}}{m_{V\cup W\cup X}} \rp} \frac{C\lp \frac{m_{V}*m_{W \cup X}}{m_{V\cup W\cup X}}, \frac{m_{W}*m_{X}}{m_{W\cup X}} \rp}{C\lp m_{W \cup X}, \frac{m_{W}*m_{X}}{m_{W\cup X}}\rp}\\
   & = C\lp m_{W}, m_{X} \rp \frac{C\lp m_{V}, m_{W\cup X} \rp}{C \lp m_{V\cup W\cup X}, \frac{m_{V}*m_{W\cup X}}{m_{V\cup W\cup X}} \rp} C\lp \frac{m_{V}}{m_{V\cup W\cup X}}, \frac{m_{W}*m_{X}}{m_{W\cup X}} \rp\\
   &=C\lp m_{W}, m_{X} \rp \frac{C\lp m_{V}, m_{W\cup X} \rp}{C \lp m_{V\cup W\cup X}, \frac{m_{V}*m_{W\cup X}}{m_{V\cup W\cup X}} \rp} \frac{C \lp m_{V}, \frac{m_{W}*m_{X}}{m_{W\cup X}}\rp}{C\lp m_{V\cup W\cup X}, \frac{m_{W}*m_{X}}{m_{W\cup X}} \rp}\\
   &= C\lp m_{W}, m_{X}\rp C\lp m_{V}, m_{W}*m_{X}\rp C\lp m_{V\cup W\cup X}, \frac{m_{V}*m_{W}*m_{X}}{m_{V\cup W\cup X}} \rp^{-1}\\
   &= C\lp m_{W}, m_{X}\rp C\lp m_{V}, m_{X} \rp C\lp m_{V}, m_{W} \rp C\lp m_{V\cup W\cup X}, \frac{m_{V}*m_{W}*m_{X}}{m_{V\cup W\cup X}} \rp^{-1}\\
   &= C\lp m_{W}* m_{V}, m_{X}\rp C\lp m_{V}, m_{W}\rp C\lp m_{V\cup W\cup X}, \frac{m_{V}*m_{W}*m_{X}}{m_{V\cup W\cup X}} \rp^{-1}.\\
\end{align*}
Thus, the constants are the same and therefore associativity holds.\\
\indent Finally, it remains to show that this product is graded color commutative, i.e. \begin{equation}\label{eq:ColorComm}
e_{V}e_{W}=(-1)^{|V| \cdot |W|} \chi \lp e_{V}, e_{W} \rp e_{W}e_{V}.
\end{equation}
Similar to the proofs for the Leibniz rule and associativity, by the commutative we can ignore the signs and the monomials. By expanding the right side of \eqref{eq:ColorComm} we obtain
\begin{align*}
    \chi\lp e_{V}, e_{W} \rp C\lp m_{W}, m_{V}\rp C\lp m_{V\cup W}, \frac{m_{W}*m_{V}}{m_{V\cup W}} \rp^{-1} &= \chi\lp m_{V}, m_{W} \rp C\lp m_{W}, m_{V}\rp C\lp m_{V\cup W}, \frac{m_{W}*m_{V}}{m_{V\cup W}} \rp^{-1}\\
    &= \frac{C\lp m_{V}, m_{W} \rp}{C\lp m_{W}, m_{V} \rp} C\lp m_{W}, m_{V} \rp C\lp m_{V\cup W}, \frac{m_{W}*m_{V}}{m_{V\cup W}} \rp^{-1}\\
    &= C\lp m_{V}, m_{W} \rp C\lp m_{V\cup W}, \frac{m_{W}*m_{V}}{m_{V\cup W}} \rp^{-1}\\
\end{align*}
which is the constant obtained by expanding the left side of \eqref{eq:ColorComm}.
\end{proof}

\section{Divided Powers}\label{sec:Gamma}
In this section we show that the Taylor resolution admits a structure of color DG algebra with divided powers (or color DG $\Gamma$-algebra). A color DG $R$ algebra $A$ is said to have divided powers if to every element $x\in A$ of even positive homological degree, there is associated a sequence of elements $x^{(k)}\in A$ ($k=0,1,2,\ldots$) satisfying a list of axioms that can be found in \cite[Definition 6.1]{FM}.

\begin{definition}
Let $\mathbf{x^{a _1}, \cdots, x^{a _m}}$ be monomials in $R$, where $\mathbf{a_i}=(a_{1,i} \ldots, a_{n,i})$. We define the \textit{greatest common divisor} of $\mathbf{x^{a_1}, \cdots, x^{a_m}}$, or $\mathrm{gcd}(\mathbf{x^{a_1}, \cdots, x^{a_m}})$, to be equal to $\mathbf{x}^{\mathrm{min} \; \mathbf{a_i}}$ where $\mathrm{min} \; \mathbf{a_i}=(\mathrm{min}_i \; a_{1,i} \;, \ldots,\; \mathrm{min}_i \; a_{n,i})$.
\end{definition}

\begin{remark}
It is straightforward to check that
\[
\mathrm{gcd}(\mathbf{x^{\alpha}, x^{\beta}})=\frac{\mathbf{x^{\alpha}}*\mathbf{x^{\beta}}}{\mathrm{lcm}(\mathbf{x^{\alpha}, x^{\beta}})},
\]
in particular the formula in Theorem \ref{thm:DG} can be written in terms of a gcd.

\end{remark}

\begin{remark}\label{rem:DGGamma}
If $U$ and $V$ are color DG $\Gamma$-algebras over $R$, then so is $U\otimes_RV$, this is a color version of \cite[1.8.3]{GulliksenLevin}. Every color DG algebra containing $\mathbb{Q}$ is a color DG $\Gamma$-algebra, this is a color version of \cite[1.7.2]{GulliksenLevin}.
\end{remark}

\begin{theorem}
Let $I=(m_1,\ldots,m_s)$ be a monomial ideal of $R$ minimally generated by $s$ elements, then the Taylor resolution $\mathbb{T}$ of $R/I$ admits a unique color DG $\Gamma$-structure, given by the following formula
\[
\left(\sum_{\substack{P\subseteq[s]\\|P|=m}}a_Pe_P\right)^{(r)}=\sum_{\substack{P_1,\ldots, P_r\subseteq[s]\\|P_i|=m,\;i=1,\ldots,r}}(a_{P_1}e_{P_1})\cdots(a_{P_r}e_{P_r}),\quad\forall m\in2\mathbb{N}\backslash\{0\},r\in\mathbb{N},
\]
where $\displaystyle\sum_{\substack{P\subseteq[s]\\|P|=m}}a_Pe_P$ is a homogeneous (with respect to all degrees) element of $\mathbb{T}$.
\end{theorem}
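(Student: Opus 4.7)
The plan is to establish uniqueness and existence in tandem by analyzing the stated explicit formula. For uniqueness, the key observation is that $\mathbb{T}$ is generated as a color DG algebra over $R$ by the odd-degree basis elements $e_1,\ldots,e_s$. Iterated application of the additivity axiom $(y+z)^{(r)}=\sum_{i+j=r}y^{(i)}z^{(j)}$ reduces the computation of $\bigl(\sum_P a_Pe_P\bigr)^{(r)}$ to a sum over tuples $(r_P)$ with $\sum_P r_P=r$ of products $\prod_P(a_Pe_P)^{(r_P)}$. The axiom $r!\cdot z^{(r)}=z^r$, combined with the identity $e_Pe_P=0$ from Theorem \ref{thm:DG} (valid because $P\cap P\neq\emptyset$), then forces $(a_Pe_P)^{(r_P)}=0$ whenever $r_P\geq 2$: the vanishing is immediate in characteristic zero by $(a_Pe_P)^{(r_P)}=(a_Pe_P)^{r_P}/r_P!=0$, and it transfers to arbitrary characteristic by passing to a universal $\mathbb{Z}$-form of $\mathbb{T}$ which is torsion free, using the principle in Remark \ref{rem:DGGamma}. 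The surviving terms are indexed by ordered tuples of pairwise distinct subsets, and these reassemble into exactly the displayed formula.

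For existence, I would define $\gamma_r(y)$ by the given formula and verify the axioms of a color DG $\Gamma$-algebra from \cite[Definition 6.1]{FM} one at a time. The base cases $\gamma_0(y)=1$ and $\gamma_1(y)=y$ are immediate. The additivity axiom follows by partitioning each tuple $(P_1,\ldots,P_r)$ according to whether each $P_\ell$ indexes a summand of $y$ or of $z$. The product axiom $\gamma_r(y)\gamma_s(y)=\binom{r+s}{r}\gamma_{r+s}(y)$ reduces, after discarding tuples that contain a repeated subset (which contribute zero), to the standard combinatorial identity counting shuffles of two sequences of pairwise distinct subsets. Compatibility with the differential, $\partial\gamma_r(y)=\partial(y)\gamma_{r-1}(y)$, is obtained by expanding the $r$-fold product via the color Leibniz rule from Theorem \ref{thm:DG} and again observing that any summand containing a repeated subset vanishes.

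The principal obstacle will be the scalar axiom $\gamma_r(\alpha y)=\alpha^{r}\gamma_r(y)$, together with its $\chi$-twisted color variant, for bihomogeneous $\alpha\in R$: moving $\alpha$ past each of the $r$ occurrences of the factors $e_{P_\ell}$ inside a given summand accumulates a product of $\chi$-factors that must collapse to the expected bicharacter correction of $\alpha^r$. Verifying this collapse will rely on the bicharacter identities established in the Lemma of Section \ref{sec:Bicharacters}, used much as in the proof of associativity within Theorem \ref{thm:DG}, and is the one place where the noncommutativity of $R$ contributes combinatorics genuinely beyond the commutative case treated by Avramov.
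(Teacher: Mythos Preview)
Your uniqueness argument has a genuine gap in positive characteristic. You correctly reduce to showing $(a_Pe_P)^{(r)}=0$ for $r\geq 2$, but the step ``$r!\,z^{(r)}=z^r=0$ hence $z^{(r)}=0$'' only works when $r!$ is a unit. The proposed remedy---passing to a torsion-free $\mathbb{Z}$-form---does not rescue \emph{uniqueness}: it shows that the particular $\Gamma$-structure descending from the $\mathbb{Z}$-form satisfies $e_P^{(r)}=0$, but says nothing about an arbitrary $\Gamma$-structure on $\mathbb{T}$ over $\kk$, which has no reason to lift. The paper closes this gap with a different idea. For $P=\{i_1,\ldots,i_m\}$ it introduces
\[
d_P=\prod_{l=1}^{m-1}\gcd\bigl(m_{\{i_1,\ldots,i_l\}},m_{i_{l+1}}\bigr),
\]
chosen so that $d_Pe_P$ equals, up to a unit in $\kk^*$, the product $e_{i_1}\cdots e_{i_m}$ of odd-degree elements (this is exactly what the product formula in Theorem~\ref{thm:DG} gives). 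The $\Gamma$-axiom that divided powers of a product of odd elements vanish then yields $(d_Pe_P)^{(r)}=0$, whence $d_P^{\,r}e_P^{(r)}=0$; since $d_P$ is a non-zerodivisor on $\mathbb{T}$, one concludes $e_P^{(r)}=0$ in every characteristic and for every $\Gamma$-structure. Note also that your opening claim that $\mathbb{T}$ is generated as a color DG $R$-algebra by $e_1,\ldots,e_s$ is false in general---the monomial $d_P$ is precisely the obstruction---and the $d_P$ trick is how the paper circumvents this.

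For existence the two approaches genuinely differ. You propose to verify the $\Gamma$-axioms directly for the displayed formula, flagging the color scalar axiom as the main difficulty. The paper instead deploys the universal $\mathbb{Z}$-form for \emph{this} step (not for uniqueness): over $F^{\mathbb{Q}}=\mathbb{Q}(y_{i,j})$ the Taylor resolution $\mathbb{T}_S$ is automatically a DG $\Gamma$-algebra since it contains $\mathbb{Q}$; this structure restricts to the torsion-free $\mathbb{Z}$-form $\mathbb{T}_A$; and base-changing along $F^{\mathbb{Z}}\to\kk$, $y_{i,j}\mapsto q_{i,j}$, transports it to $\mathbb{T}$ via Remark~\ref{rem:DGGamma}. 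This sidesteps your anticipated ``principal obstacle'' entirely, at the cost of not producing the explicit formula directly---but the formula then follows from the uniqueness half.
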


\begin{proof}
We first prove that there exists a color DG $\Gamma$-structure on $\mathbb{T}$.

Consider the following rings
\[
F^{\mathbb{Q}}=\mathbb{Q}(y_{i,j}\mid i<j; i,j=1,\ldots,n),\quad\mathrm{and}\quad F^{\mathbb{Z}}=\mathbb{Z}(y_{i,j}\mid i<j; i,j=1,\ldots,n),
\]
where the $y_{i,j}$'s are indeterminates. 
Consider the following Ore extensions $S=F^{\mathbb{Q}}[x_1,\ldots, x_n]$ and $A=F^{\mathbb{Z}}[x_1,\ldots,x_n]$, where $x_ix_j=y_{i,j}x_jx_i$ for all $i,j=1,\ldots,n$ while $\mathbb{Q}$ and $\mathbb{Z}$ are central in $S$ and $A$ respectively. Let $M_I$ be the minimal set of generators of $I$, then we denote by $IS$ and $IA$ the ideals of $S$ and $A$ generated by $M_I$. We denote by $\mathbb{T}_S$ and $\mathbb{T}_A$ the Taylor resolution of $S/IS$ over $S$ and of $A/IA$ over $A$ respectively. The DG algebra $\mathbb{T}_S$ is a DG $\Gamma$-algebra by Remark \ref{rem:DGGamma} since it contains $\mathbb{Q}$. The canonical map $\mathbb{T}_A\rightarrow F^\Q\otimes_{F^\Z}\mathbb{T}_A=\mathbb{T}_S$ induces a DG $\Gamma$-structure on $\mathbb{T}_S$ by Remark \ref{rem:DGGamma}. The ring $\kk$ is a $F^\Z$-module through the map $F^\Z\rightarrow\kk$ that sends $1\mapsto1$ and $y_{i,j}\mapsto q_{i,j}$ for all $i,j=1\ldots, n$. The canonical map $\mathbb{T}_A\rightarrow \kk\otimes_{F^\Z}\mathbb{T}_A=\mathbb{T}$ induces a DG $\Gamma$-structure on $T$ by Remark \ref{rem:DGGamma}.

Now we prove the uniqueness.

Let $P=\{i_1,\ldots,i_m\}\subseteq[s]$ with $m\in 2\mathbb{N}\backslash\{0\}$, we define
\[
d_P=\prod_{l=1}^{m-1}\mathrm{gcd}(m_{\{i_1,\ldots,i_l\}},m_{i_{l+1}}),
\]
then for $r\geq2$ one has
\[
d_P^re_P^{(r)}=\chi(d_P,e_P)^{\binom{r}{2}}(d_Pe_P)^{(r)}=\left(\pm c\prod_{i\in P}e_i\right)^{(r)}=(\pm c)^r\left(\prod_{i\in P}e_i\right)^{(r)}=0,
\]
for some $c\in\kk^*$, where the first, third and fourth equality follow from \cite[Definition 6.1(4)]{FM} and the second equality follows from the product formula in Theorem \ref{thm:DG}. Since $d_P$ is not a zero-divisor on $\mathbb{T}$, it follows that $e_P^{(r)}=0$ for $r\geq2$. The assertion of the Theorem follows from the following string of equalities
\begin{align*}
\left(\sum_{\substack{P\subseteq[s]\\|P|=m}}a_Pe_P\right)^{(r)}&=\sum_{\substack{P_1,\ldots, P_r\subseteq[s]\\|P_i|=m,\;i=1,\ldots,r\\q_1+\cdots+ q_r=r}}(a_{P_1}e_{P_1})^{(q_1)}\cdots(a_{P_r}e_{P_r})^{(q_r)}\\&=\sum_{\substack{P_1,\ldots, P_r\subseteq[s]\\|P_i|=m,\;i=1,\ldots,r\\q_1+\cdots+ q_r=r}}\left(\chi(e_{P_1},a_{P_1})^{\binom{q_1}{2}}a_{P_1}^{q_1}e_{P_1}^{(q_1)}\right)\cdots\left(\chi(e_{P_r},a_{P_r})^{\binom{q_r}{2}}a_{P_r}^{q_r}e_{P_r}^{(q_r)}\right)\\
&=\sum_{\substack{P_1,\ldots, P_r\subseteq[s]\\|P_i|=m,\;i=1,\ldots,r}}(a_{P_1}e_{P_1})\cdots(a_{P_r}e_{P_r}),
\end{align*}
where the first equality follows from \cite[Definition 6.1(3)]{FM}, the second from \cite[Definition 6.1(4)]{FM}, and the third follows from the fact that $e_P^{(r)}=0$ for $r\geq2$.

\end{proof}

\section{Color Homotopy Lie Algebra}\label{sec:pi}
In this section $\kk$ is assumed to be a field. Let $S$ be a color commutative ring over $\kk$.
\begin{chunk}
Let $A$ be a color DG $\Gamma$-algebra over $S$. By adjoining either exterior variables or divided powers variables to $A$, we can construct a minimal free $A$-resolution of $\kk$ with the structure of DG $\Gamma$-algebra, see \cite[Construction 2.4 and Construction 2.6]{FM}. The minimal resolution obtained in this manner is called an \emph{acyclic closure} of $\kk$ over $A$, see \cite[Definition 5.6]{FM}. We denote this acyclic closure by $A\langle X\rangle$, where $X$ is the set of variables that have been adjoined.
\end{chunk}
\begin{chunk}
Let $U$ be a color module over the graded algebra $A\langle X\rangle^\natural$ (where $^\natural$ denotes the forgetful functor from the category of DG $S$-algebras to the category of graded $S$-algebras). An $A$-\emph{linear color derivation} $D:A\langle X\rangle\rightarrow U$ is an element of $\mathrm{Hom}_A(A\langle X\rangle,U)$ such that
\begin{align*}
D(a)   =~& 0 & & \text{for all}~a \in A, \\
D(bb') =~& D(b)b' + (-1)^{|D||b|}\chi(D,b)bD(b') & & \text{for all}~b,b' \in A\langle X\rangle, b,b'~\text{trihomogeneous}, \\
D(x^{(i)}) =~& D(x)x^{(i-1)} & & \text{for all}~x \in X_\text{even}~\text{and all}~i \in \mathbb{N},
\end{align*}
We denote the set of $A$-linear color derivations from $A\langle X\rangle$ to $U$ by $\mathrm{Der}_A(A\langle X\rangle, U)$. One can check that if $U$ is a color DG $A$-module, then $\mathrm{Der}_A(A\langle X\rangle, U)$ is a color DG $A\langle X\rangle$-submodule of $\mathrm{Hom}_A(A\langle X\rangle,U)$.
\end{chunk}

\begin{chunk}
The same proof used in \cite[Proposition 7.10]{FM} shows that $\mathrm{Der}_A(A\langle X\rangle,A\langle X\rangle)$ is a color DG Lie algebra, see \cite[Definition 7.1]{FM}. We define the color homotopy Lie algebra of $A$ to be the homology of the color DG Lie algebra $\Der_A(A\langle X\rangle,A\langle X\rangle)$ and denote it by $\pi(A)$.

\end{chunk}

\begin{chunk}
It is shown in \cite[Proposition 5.2]{FM} that  there exists a (semi-free) color DG module $\Diff_AA\langle X\rangle$ over $A\langle X\rangle$
and a chain derivation $d :A\langle X\rangle \to \Diff_AA\langle X\rangle$ of trivial cohomological and $G$-degree
such that
\begin{enumerate}
\item The $A\langle X\rangle^\natural$-module $(\Diff_A(A\langle X\rangle)^\natural$ has a basis
$dX = \{dx~|~x \in X\}$, where $dx$ and $x$ have the same internal, homological,
and $G$-degrees.
\item $d(x) = dx$ for all $x \in X$.
\item $\partial(b(dx)) = \partial(b)(dx) + (-1)^{|b|}bd(\partial(x))$ for all
$b \in A\langle X\rangle$.
\item The map 
\begin{eqnarray*}
\Hom_{A\langle X\rangle}(\Diff_A A\langle X\rangle,U) & \to & \Der_A(A\langle X\rangle,U) \\
 \beta & \mapsto & \beta \circ d
\end{eqnarray*}
is an isomorphism (natural in $U$) of color DG $A\langle X\rangle$-modules
for every DG $A\langle X\rangle$-module $U$.
\end{enumerate}
\end{chunk}

Let $S=R/I$ with $I$ monomial ideal generated by monomials of degree at least 2, we denote by $K^S$ the skew Koszul complex on the images of the variables of $R$ in $S$, see \cite[Construction 2.10]{FM}. 

For the remainder of the paper $\pi^{\geq2}(S)$ denotes the homotopy Lie algebra of $S$ concentrated in cohomological degrees larger or equal to 2.
\begin{proposition}\label{prop:piKoszul}
Let $S=R/I$ with $I$ monomial ideal generated by monomials of degree at least 2. Then there is an isomorphism of graded color Lie algebras $\pi(K^S)\cong\pi^{\geq2}(S)$.
\end{proposition}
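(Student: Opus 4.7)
The plan is to build a single color DG $\Gamma$-algebra that is simultaneously an acyclic closure of $\kk$ over $S$ and over $K^S$, and then extract the isomorphism from a short exact sequence of derivation complexes. Since each minimal generator of $I$ has degree at least two, the images of $x_1,\ldots,x_n$ generate the irrelevant ideal of $S$, so $K^S=S\langle e_1,\ldots,e_n\rangle$ is the first stage of the color Tate construction of an acyclic closure of $\kk$ over $S$. Continuing that construction by adjoining a set $Y$ of variables in homological degrees $\geq 2$ that successively kills the positive homology, one obtains $K^S\langle Y\rangle$, which is simultaneously a minimal color DG $\Gamma$-resolution of $\kk$ over $S$ and over $K^S$.

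Next I would compare the two color DG Lie algebras
\[
\Der_S(K^S\langle Y\rangle, K^S\langle Y\rangle) \quad\text{and}\quad \Der_{K^S}(K^S\langle Y\rangle, K^S\langle Y\rangle),
\]
whose homologies compute $\pi(S)$ and $\pi(K^S)$ respectively. Because $S\subseteq K^S$, every $K^S$-linear color derivation is automatically $S$-linear and vanishes on the Koszul variables, giving an inclusion $\Der_{K^S}\hookrightarrow \Der_S$ of color DG Lie algebras. The second fundamental exact sequence of color differentials associated with $S\to K^S\to K^S\langle Y\rangle$ is split exact as a sequence of graded free $K^S\langle Y\rangle$-modules, with kernel $Q$ free on $\{de_1,\ldots,de_n\}$ in homological degree $1$. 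Applying $\Hom_{K^S\langle Y\rangle}(-,K^S\langle Y\rangle)$ yields the short exact sequence
\[
0 \to \Der_{K^S}(K^S\langle Y\rangle, K^S\langle Y\rangle) \to \Der_S(K^S\langle Y\rangle, K^S\langle Y\rangle) \to \Hom_{K^S\langle Y\rangle}(Q, K^S\langle Y\rangle) \to 0.
\]

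Since $K^S\langle Y\rangle$ has homology $\kk$ concentrated in homological degree $0$ and $Q$ is free of rank $n$ in homological degree $1$, the complex $\Hom_{K^S\langle Y\rangle}(Q,K^S\langle Y\rangle)$ has homology $\kk^n$ concentrated in cohomological degree $1$. The long exact sequence in cohomology associated with the short exact sequence above therefore gives $\pi^n(K^S)\cong\pi^n(S)$ for every $n\geq 2$. Moreover, since all variables in $Y$ have homological degree $\geq 2$, the color DG Lie algebra $\Der_{K^S}(K^S\langle Y\rangle, K^S\langle Y\rangle)$ is concentrated in cohomological degree $\geq 2$, so $\pi(K^S)=\pi^{\geq 2}(K^S)$. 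Because the comparison map is induced by the inclusion of color DG Lie algebras, it is a morphism of graded color Lie algebras, and hence the bijection $\pi(K^S)\cong\pi^{\geq 2}(S)$ produced by the degree analysis is in fact an isomorphism of graded color Lie algebras.

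The main obstacle is verifying that the color structure is respected throughout: that the Tate-like adjunction produces a minimal acyclic color DG $\Gamma$-resolution in the skew setting (via the framework of \cite[Proposition 5.2]{FM} and the color $\Gamma$-algebra axioms of \cite[Section 6]{FM}), that the fundamental sequence for color differentials is split exact as a sequence of color DG $K^S\langle Y\rangle$-modules (with the correct bicharacter signs), and that the inclusion $\Der_{K^S}\hookrightarrow\Der_S$ respects the color Lie bracket and not merely the underlying graded vector-space structure, so that the induced map on $\pi$ genuinely preserves the color Lie structure.
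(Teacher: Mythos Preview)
Your setup coincides with the paper's: both observe that the acyclic closure $F=S\langle X\rangle$ equals $K^S\langle Y\rangle$ with $Y=X_{\geq 2}$, and both compare $\Der_{K^S}(F,F)\hookrightarrow\Der_S(F,F)$. Where you run a long exact sequence with $F$-coefficients, the paper instead passes to $\kk$-coefficients via the augmentation quasi-isomorphism $\Der_A(F,F)\to\Der_A(F,\kk)\cong\Hom_\kk(\kk X_A,\kk)$ (for $A=S$ and $A=K^S$); by minimality the differential on $\Hom_\kk(\kk X_A,\kk)$ is zero, and the inclusion $\Hom_\kk(\kk Y,\kk)\hookrightarrow\Hom_\kk(\kk X,\kk)$ is then transparently the identity onto cohomological degrees $\geq 2$.

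Your long-exact-sequence route has two gaps as written. First, the claim that $\Der_{K^S}(F,F)$ is concentrated in cohomological degree $\geq 2$ is false: a $K^S$-linear derivation of cohomological degree $1$ (or $\leq 0$) can send $y\in Y_2$ to a nonzero element of $F_{|y|-1}$ (or of $F_{\geq|y|}$), so the complex lives in all degrees; only its homology is concentrated in degree $\geq 2$, and that requires its own argument. Second, the long exact sequence gives $\pi^n(K^S)\cong\pi^n(S)$ for $n\geq 3$ immediately, but for $n=2$ you must show that the connecting map $\kk^n=H^1(\Hom_F(Q,F))\to\pi^2(K^S)$ vanishes, equivalently that $\pi^1(S)\to H^1(\Hom_F(Q,F))$ is surjective. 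This is true but not automatic: the naive candidates $\partial/\partial e_j$ are not cycles in $\Der_S(F,F)$ (their commutator with $\partial^F$ fails to vanish on the variables in $Y_2$), so one must first correct them to genuine cycles and then verify that the corrected classes still surject onto the basis of $H^1(\Hom_F(Q,F))$. Both gaps are precisely what the paper's passage to $\kk$-coefficients sidesteps.
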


\begin{proof}
Let $F=S\langle X\rangle$ be the acyclic closure of $\kk$ over $S$, where $X$ is the set of variables adjoint, see \cite[Definition 5.6]{FM}. Then $F$ is also the acyclic closure of $\kk$ over $K^S$ since $F=K^S\langle X_{\geq2}\rangle$, where $X_{\geq2}$ is the set of variables of homological degree larger or equal to two. For the definition and properties of the module of differentials $\Diff$ see \cite[Proposition 5.2]{FM}. Consider the following commuting diagram

\begin{equation*}
\begin{tikzpicture}[baseline=(current  bounding  box.center)]
 \matrix (m) [matrix of math nodes,row sep=3em,column sep=4em,minimum width=2em] {
\Der_{K^S}(F,F)&\Der_S(F,F)\\
\Der_{K^S}(F,\kk)&\Der_S(F,\kk)\\
\Hom_F(\Diff_{K^S}F,\kk)&\Hom_F(\Diff_SF,\kk)\\
\Hom_{\kk}\lp \lp\Diff_{K^S}F\rp\otimes_F\kk,\kk\rp&\Hom_{\kk}\lp \lp\Diff_{S}F\rp\otimes_F\kk,\kk\rp\\
\Hom_{\kk}(\kk X_{\geq2},\kk)&\Hom_{\kk}(\kk X,\kk)\\};
\path[->] (m-1-1) edge (m-1-2);
\path[->] (m-5-1) edge (m-5-2);
\path[->] (m-1-1) edge (m-2-1);
\path[->] (m-1-2) edge (m-2-2);
\path[->] (m-3-1) edge node[left] {\cite[Proposition 5.2]{FM}} (m-2-1);
\path[->] (m-3-2) edge node[right] {\cite[Proposition 5.2]{FM}} (m-2-2);
\draw [double equal sign distance]  (m-3-1) to [out=270, in=90] (m-4-1);
\draw [double equal sign distance]  (m-3-2) to [out=270, in=90] (m-4-2);
\draw [double equal sign distance]  (m-4-1) to [out=270, in=90] (m-5-1);
\draw [double equal sign distance]  (m-4-2) to [out=270, in=90] (m-5-2);
\end{tikzpicture}
\end{equation*}
where the first horizontal map is induced by the inclusion $S\subseteq K^S$, and the last horizontal map is the obvious one. The vertical maps on the top right and left are induced by the augmentation $F\rightarrow\kk$, and therefore are quasi-isomorphisms by \cite[Corollary 5.2]{FM}. The assertion of the Proposition now follows by taking homology in this diagram.

\end{proof}

\begin{remark}\label{thm:piFunctorial}
Let $S$ be a color commutative ring. A \emph{local} color DG $S$-algebra $A$ is a homologically nonnegatively graded color DG $S$-algebra with a unique maximal DG ideal $\m_A$, such that $H_i(A)$ is finitely generated over $S$ for each $i\geq 0$, and $S\to A_0$ is a surjective local homormophism. In this case, $\m_A$ can be written as
\[
\m_A=\m_S\oplus A_1\oplus A_2\oplus \ldots.
\]
A morphism of local color DG $S$-algebras $\vp:A\to B$ is a morphism of color DG $S$-algebras satisfying $\vp(\m_A)\subseteq \m_B.$

Let $\vp:A\rightarrow B$ be a quasi-isomorphism of local color DG $S$-algebras. Then there is an induced map $\pi(\vp):\pi(B)\rightarrow\pi(A)$ which is an isomorphism of graded color Lie algebras.
This is a well-known result in the commutative case, for a proof in the color case see \cite{FMP}.
\end{remark}

The next Proposition gives a way to compute most of the color homotopy Lie algebra of $S$ by working with the Taylor resolution of $S$ over $R$.
\begin{proposition}\label{prop:pi>=2}
Let $I$ be a monomial ideal of $R$ minimally generated by monomials of degree at least 2, let $S=R/I$ and let $\mathbb{T}$ be the Taylor resolution of $I$ over $S$, then there is an isomorphism of graded color Lie algebras
\[
\pi^{\geq2}(S)\cong\pi(\mathbb{T}\otimes_R\kk).
\]
\end{proposition}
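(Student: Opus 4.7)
The plan is to interpolate between $K^S$ and $\mathbb{T}\otimes_R\kk$ via a common intermediate color DG $\Gamma$-algebra, and then combine Proposition~\ref{prop:piKoszul} with the functoriality of $\pi$ recorded in Remark~\ref{thm:piFunctorial}.

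First I would form the tensor product $\mathcal{E}:=K^R\otimes_R\mathbb{T}$, where $K^R$ denotes the skew Koszul complex of $R$ on $x_1,\ldots,x_n$ (it is a color DG $\Gamma$-algebra resolution of $\kk$ over $R$, see \cite[Construction~2.10]{FM}). By Remark~\ref{rem:DGGamma}, the tensor product of two color DG $\Gamma$-algebras over $R$ is again one, so $\mathcal{E}$ is a color DG $\Gamma$-algebra augmented to $\kk$. Next I would consider the two natural projections
\[
\vp_1:\mathcal{E}=K^R\otimes_R\mathbb{T}\longrightarrow K^R\otimes_R S = K^S,\qquad
\vp_2:\mathcal{E}=K^R\otimes_R\mathbb{T}\longrightarrow \kk\otimes_R\mathbb{T}=\mathbb{T}\otimes_R\kk,
\]
induced respectively by the augmentations $\mathbb{T}\to S$ and $K^R\to\kk$. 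Each $\vp_j$ is a morphism of color DG $\Gamma$-algebras, and each is a quasi-isomorphism: for $\vp_1$, every $(K^R)_i$ is $R$-free so tensoring with $K^R$ preserves the quasi-isomorphism $\mathbb{T}\to S$; the argument for $\vp_2$ is symmetric using that every $\mathbb{T}_i$ is $R$-free.

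Now I would invoke Remark~\ref{thm:piFunctorial}, which asserts that a quasi-isomorphism of local color DG algebras induces an isomorphism on homotopy Lie algebras. This yields the zig-zag
\[
\pi(K^S)\xleftarrow[\cong]{\pi(\vp_1)}\pi(\mathcal{E})\xrightarrow[\cong]{\pi(\vp_2)}\pi(\mathbb{T}\otimes_R\kk)
\]
of isomorphisms of graded color Lie algebras. Combining this with Proposition~\ref{prop:piKoszul}, which identifies $\pi(K^S)$ with $\pi^{\geq 2}(S)$, yields the desired isomorphism $\pi^{\geq 2}(S)\cong\pi(\mathbb{T}\otimes_R\kk)$.

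The principal technical point to verify is the bookkeeping required to regard $K^S$, $\mathcal{E}$, and $\mathbb{T}\otimes_R\kk$ simultaneously as local color DG $\Gamma$-algebras over a common ground ring (the natural choice being $R$, acting on the degree-zero parts $S$, $R$, and $\kk$ respectively via the evident surjections), and to confirm that $\vp_1$ and $\vp_2$ respect this local structure together with the augmentations to $\kk$. Once this setup is in place, the conclusion follows directly from Remark~\ref{thm:piFunctorial} and Proposition~\ref{prop:piKoszul} without further computation.
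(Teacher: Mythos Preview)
Your proposal is correct and follows essentially the same route as the paper: the paper forms the same intermediate object $\mathbb{T}\otimes_R K^R$, uses the two augmentation-induced maps to $K^S$ and to $\mathbb{T}\otimes_R\kk$ (citing \cite[Proposition~4.12]{FM} for the quasi-isomorphism claim where you argue directly via freeness), and then concludes by combining Proposition~\ref{prop:piKoszul} with Remark~\ref{thm:piFunctorial}. The only cosmetic discrepancy is the direction of the arrows in your zig-zag on $\pi$, since by Remark~\ref{thm:piFunctorial} the map $\pi(\vp)$ is contravariant; this does not affect the argument.
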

\begin{proof}
Let $\varepsilon:K^R\rightarrow\kk$ and $\tau:\mathbb{T}\rightarrow S$ be the augmentations of $K^R$ and $\mathbb{T}$. Consider the following maps, which are quasi-isomorphisms by \cite[Proposition 4.12]{FM},
\[
K^S=S\otimes_RK^R\xleftarrow{\;\;\tau\otimes_S K^R\;\;} \mathbb{T}\otimes_R K^R\xrightarrow{\;\; \mathbb{T}\otimes_R\varepsilon\;\;}\mathbb{T}\otimes_R\kk,
\]
where the equality on the left follows since $I$ is generated by monomials of degree at least 2. As a consequence, there is the following chain of isomorphisms of graded color Lie algebras
\begin{align*}
\pi^{\geq2}(S)&\cong \pi(K^S)\\
&\cong\pi(\mathbb{T}\otimes_RK^R)\\
&\cong\pi(\mathbb{T}\otimes_R\kk),
\end{align*}
where the the first isomorphism follows from Proposition \ref{prop:piKoszul} and the last two isomorphisms follow from Remark \ref{thm:piFunctorial}.
\end{proof}

\section{Lattices and Graphs}\label{sec:Applications}

In this section $\Bbbk$ is also assumed to be a field.

\begin{definition}
Let $I$ be a monomial ideal of $R$ minimally generated by the monomials $m_1,\ldots,m_s$. The \emph{LCM lattice} of $I$ is the set 
\[
L_I=\{m_F\mid F\subseteq[s]\},
\]
ordered by divisibility $m_P\leq m_Q$ if and only if $m_P\mid m_Q$; with the convention $m_\emptyset=1$. The join of $m_P$ and $m_Q$ is $m_{P\cup Q}$, while the meet is $m_{P\cap Q}$. 
\end{definition}

\begin{definition}
Let $I$ be a monomial ideal of $R$ minimally generated by the monomials $m_1,\ldots, m_s$. The \emph{GCD graph} of $I$, denoted by $G_I$, has vertices the elements of $L_I$. There is an edge from $m_P$ to $m_Q$ with weight $C(m_P,m_Q)$ and an edge from $m_Q$ to $m_P$ with weight $C(m_Q,m_P)$ if and only if $\mathrm{gcd}(m_P,m_Q)=1$.
\end{definition}

\begin{remark}
The Boolean lattice on $[s]$ is the set $\mathcal{P}([s])$ (set of subsets of $[s]$) ordered by inclusion. The join of two subsets is the union and the meet is the intersection. Let $I,I'$ be ideals of skew polynomial rings $R$ and $R'$ respectively, minimally generated by $s$ monomials. Let $\lambda:L_I\rightarrow L_{I'}$ be an isomorphism of lattices. The atoms of $L_I$ and $L_{I'}$ are the minimal generating set of $I$ and $I'$ respectively, therefore $\lambda$ maps the minimal generators of $I$ bijectively onto the minimal generators of $I'$. So $\lambda$ induces an isomorphism of lattices $\widehat\lambda:\mathcal{P}([s])\rightarrow\mathcal{P}([s])$.

Notice that by definition one has $m_{\hat\lambda(\{i\})}=\lambda(m_i)$ for every $i\in[s]$. Therefore let $F=\{i_1,i_2,\ldots,i_j\}\subseteq[s]$, then
\begin{align*}
m_{\hat\lambda(F)}&=m_{\{\hat\lambda(\{i_1\}),\ldots,\hat\lambda(\{i_j\})\}}\\
&=\mathrm{Join}(m_{\hat\lambda(\{i_1\})},\ldots,m_{\hat\lambda(\{i_j\})})\\
&=\mathrm{Join}(\lambda(m_{i_1}),\ldots,\lambda(m_{i_j}))\\
&=\lambda(\mathrm{Join}(m_{i_1},\ldots,m_{i_j}))\\
&=\lambda(m_F).
\end{align*}

\end{remark}

In the following theorem, $\mathbf{x}$ and $\mathbf{x}'$ are sets of indeterminates.
\begin{theorem}\label{thm:IsoClasses}
Let $R=\Bbbk_\mathfrak{q}[\mathbf{x}]$ and $R'=\Bbbk_{\mathfrak{q}'}[\mathbf{x}']$ be skew polynomial rings with the same group of colors and same group bicharacter. Let $I$ and $I'$ be monomial ideals of $R$ and $R'$ respectively, minimally generated by $s$ monomials of degree at least 2. If there is a color preserving isomorphism of lattices $\lambda:L_I\rightarrow L_{I'}$ that induces an isomorphism of graphs $G_I\rightarrow G_{I'}$, then there is an isomorphism of graded color Lie algebras 
\[
\pi^{\geq2}(R/I)\cong\pi^{\geq2}(R'/I').
\]
\end{theorem}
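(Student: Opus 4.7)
The plan is to use Proposition~\ref{prop:pi>=2} to rewrite both $\pi^{\geq 2}(R/I)$ and $\pi^{\geq 2}(R'/I')$ as the homotopy Lie algebras of the finite-dimensional color DG $\Gamma$-algebras $\mathbb{T}\otimes_R\kk$ and $\mathbb{T}'\otimes_{R'}\kk$ respectively, and then, by Remark~\ref{thm:piFunctorial}, to reduce to constructing an isomorphism of color DG $\Gamma$-algebras $\Phi\colon\mathbb{T}\otimes_R\kk\to\mathbb{T}'\otimes_{R'}\kk$.

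The core ingredient is the permutation of $[s]$ underlying $\hat\lambda$: by the remark preceding the statement, $\hat\lambda$ is a lattice automorphism of $\mathcal{P}([s])$ and hence is given by a permutation of $[s]$, which I will also call $\hat\lambda$, satisfying $\lambda(m_F)=m'_{\hat\lambda(F)}$ for every $F\subseteq[s]$. Since the maximal ideal of $R$ acts as zero on $\kk$, the differential and product from \cref{cons:Taylor} and \cref{thm:DG} simplify on $\mathbb{T}\otimes_R\kk$: the differential is
\[
\partial(e_F\otimes 1)=\sum_{\substack{i\in F\\ m_F=m_{F\setminus\{i\}}}}(-1)^{\sigma(F,i)}(e_{F\setminus\{i\}}\otimes 1),
\]
while $(e_V\otimes 1)(e_W\otimes 1)=(-1)^{\sigma(V,W)}C(m_V,m_W)(e_{V\cup W}\otimes 1)$ whenever $V\cap W=\emptyset$ and $\gcd(m_V,m_W)=1$, and is zero otherwise. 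I would then define
\[
\Phi(e_F\otimes 1)=\mathrm{sgn}(\hat\lambda|_F)\,(e'_{\hat\lambda(F)}\otimes 1),
\]
with $\mathrm{sgn}(\hat\lambda|_F)$ the sign of the permutation that sorts $\hat\lambda(i_1),\ldots,\hat\lambda(i_k)$ into increasing order when $F=\{i_1<\cdots<i_k\}$; this makes $\Phi$ a $\kk$-linear bijection and the map induced by the permutation $\hat\lambda$ on the underlying exterior algebra.

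Verifying that $\Phi$ is an isomorphism of color DG $\Gamma$-algebras reduces to four compatibilities. The $\mathbb{Z}\times G$-grading is respected because $\lambda$ is color preserving, so $e_F$ and $e'_{\hat\lambda(F)}$ have equal $G$-degree. Compatibility with the differential, after using the lattice isomorphism to match the indexing sets $\{i:m_F=m_{F\setminus\{i\}}\}$ and $\{j:m'_{\hat\lambda(F)}=m'_{\hat\lambda(F)\setminus\{j\}}\}$, comes down to the sign identity $(-1)^{\sigma(F,i)}\mathrm{sgn}(\hat\lambda|_{F\setminus\{i\}})=\mathrm{sgn}(\hat\lambda|_F)(-1)^{\sigma(\hat\lambda(F),\hat\lambda(i))}$, which is the standard equivariance of the Koszul differential under a permutation of coordinates. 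Compatibility with the product leans on the graph isomorphism in two places: the edges of $G_I$ encode the condition $\gcd(m_V,m_W)=1$, so this condition is preserved under $\hat\lambda$, and the edge weights give $C(m_V,m_W)=C'(m'_{\hat\lambda(V)},m'_{\hat\lambda(W)})$ in that case; a companion sign identity of the same flavor handles the $(-1)^{\sigma(V,W)}$ factor. Finally, compatibility with divided powers is automatic for any isomorphism of color-commutative DG algebras in view of the explicit formula in Section~\ref{sec:Gamma}, which expresses $(\sum_P a_P e_P)^{(r)}$ purely in terms of multiplication. The principal obstacle is the sign bookkeeping in the two exterior-algebra identities just described; the graph-isomorphism hypothesis is tailored precisely to force the color-constant factors to agree, so beyond the signs no structural issue is expected.
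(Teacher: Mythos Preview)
Your proposal is correct and follows essentially the same approach as the paper: reduce via Proposition~\ref{prop:pi>=2} and Remark~\ref{thm:piFunctorial} to building an isomorphism of color DG $\Gamma$-algebras $\mathbb{T}\otimes_R\kk\to\mathbb{T}'\otimes_{R'}\kk$ induced by the permutation $\hat\lambda$, then check it against the simplified differential and product formulas. The one difference is cosmetic: the paper first relabels the minimal generators of $I'$ so that $\lambda(m_i)=m_i'$, which forces $\hat\lambda=\mathrm{id}$ and makes your sign factors $\mathrm{sgn}(\hat\lambda|_F)$ all equal to $1$, so the two sign identities you flag as the ``principal obstacle'' become trivial; your explicit-sign version is equivalent but does the bookkeeping by hand.
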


%\textcolor{red}{Note: Lucho does not care if the isomorphism between $T\otimes k$ and $T'\otimes k$ does not preserve the internal degree...so should be care about it preserving the color? The internal degree does not play a role in the structure of the DG algebra, while the color does (color commutativity), so I am inclined to say that preserving the color is important even though preserving the internal degree isn't}

%\textcolor{red}{Note 2: In the statement of Theorem 1 of Lucho's paper he does not make $k$ vary, but in the proof $k$ seems to vary. If the field varies then how can the Lie algebra be isomorphic if they are Lie algebras over different fields?}

\begin{proof}
Let $I=(m_1,\ldots,m_s)$ and $I'=(m_1',\ldots,m_s')$. Let $\mathbb{T}$ be the Taylor resolution of $R/I$ and $\mathbb{T}'$ the Taylor resolution of $R'/I'$. Let  $\{e_F\mid F\subseteq[s]\}$ be the canonical basis of $\mathbb{T}$, and $\{e_F'\mid F\subseteq[s]\}$ the canonical basis of $\mathbb{T}'$. Consider the map $\bar\lambda:\mathbb{T}\otimes_R\Bbbk\rightarrow \mathbb{T}'\otimes_{R'}\Bbbk$ defined by $e_F\otimes1\mapsto e_{\widehat\lambda(F)}'\otimes1$, we claim that $\bar\lambda$ is an isomorphism of color DG $\Gamma$-algebras over $\Bbbk$.

First we notice that up to relabeling the minimal generators of $I'$, we can assume that $\lambda(m_i)=m_i'$ for all $i\in[s]$. We notice that by the hypotheses on $\lambda$, it follows that $\bar\lambda$ preserves the internal degree and the color of the basis elements of the Taylor resolutions. To prove that $\bar\lambda$ commutes with the differentials, it suffices to show that
\[
\sum_{\substack{i\in F\\m_F=m_{F\backslash\{i\}}}}(e_{\widehat\lambda(F\backslash\{i\})}\otimes1)(-1)^{\sigma(F,i)}=\sum_{\substack{i\in \widehat\lambda(F)\\m_{\widehat\lambda(F)}=m_{\widehat\lambda(F)\backslash\{i\}}}}(e_{\widehat\lambda(F)\backslash\{i\}}'\otimes1)(-1)^{\sigma(\widehat\lambda(F),i)}.
\]
Since we relabeled the minimal generators of $I'$ in such a way that $\lambda(m_i)=m_i'$, it follows that $\widehat\lambda$ is the identity map, making the equality in the previous display obvious.

Next we show that $\bar\lambda$ preserves products. Let $F,G\subseteq[s]$ such that $F\cap G=\emptyset$ and $\mathrm{gcd}(m_F,m_G)=1$, so that $(e_F\otimes1)(e_G\otimes1)\neq0$ and $\bar\lambda(e_F\otimes1)\bar\lambda(e_G\otimes1)\neq0$. Then
\[
\bar\lambda((e_F\otimes1)(e_G\otimes1))=e_{\widehat\lambda(F\cup G)}'(-1)^{\sigma(F,G)}C(m_F,m_G),
\]
while
\[
\bar\lambda(e_F\otimes1)\bar\lambda(e_G\otimes1)=e_{\widehat\lambda(F)\cup\widehat\lambda(G)}'(-1)^{\sigma(\widehat\lambda(F),\widehat\lambda(G))}C(m_{\widehat\lambda(F)}',m_{\widehat\lambda(G)}').
\]
That the two previous displays are equal now follows from the fact that $\widehat\lambda$ is the identity map and from the assumption that $\lambda$ induces an isomorphism of weighted graphs $G_I\rightarrow G_{I'}$ (which implies that $C(m_F,m_G)=C(m_{\widehat\lambda(F)}',m_{\widehat\lambda(G)}')$).

Finally, the proof that $\bar\lambda$ preserves divided powers is analogous to the proof in the commutative case and it is therefore omitted. This concludes the proof that $\bar\lambda$ is an isomorphism of color DG $\Gamma$-algebras over $\kk$.

Next we show that $\pi^{\geq2}(R/I)\cong\pi^{\geq2}(R'/I')$. By Proposition \ref{prop:pi>=2}, it suffices to show that $\pi(\mathbb{T}\otimes_R\kk)\cong\pi(\mathbb{T}'\otimes_{R'}\kk)$, which follows from the fact that $\bar\lambda$ is an isomorphism of color DG $\Gamma$-algebras over $\kk$ and from Remark \ref{thm:piFunctorial}.
\end{proof}

\begin{remark}
We point out that the group of colors of the ring $\Bbbk_\mathfrak{q}[\mathbf{x}]$ is finite if and only if all the skew commuting parameters $q_{i,j}$ are roots of unity.
\end{remark}

\begin{corollary}\label{cor:FiniteIsoClasses}
Let $s$ be a natural number, $G$ a finite abelian group, and $\chi$ an alternating bicharacter on $G$. If $\mathbf{x}$ ranges over all finite sets of indeterminates, $\kk_\mathfrak{q}[\mathbf{x}]$ ranges among all skew polynomial rings with group of colors $G$ and bicharacter $\chi$, and $I$ ranges over all ideals in $\kk_\mathfrak{q}[\mathbf{x}]$ minimally generated by $s$ monomials of degree at least 2, then there exists only a finite number of isomorphism classes of the graded color Lie algebras $\pi^{\geq2}(\kk_\mathfrak{q}[\mathbf{x}]/I)$.
\end{corollary}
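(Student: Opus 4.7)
The plan is to apply Theorem~\ref{thm:IsoClasses} and reduce the corollary to a finite combinatorial count. By that theorem, the isomorphism class of $\pi^{\geq 2}(\kk_\mathfrak{q}[\mathbf{x}]/I)$ is determined by the isomorphism type of the enriched datum consisting of the LCM lattice $L_I$, its $G$-coloring (assigning to each vertex $m_F$ its $G$-degree in $G$), and the weighted directed GCD graph $G_I$, where an isomorphism must be simultaneously color-preserving on the lattice and weight-preserving on the graph. So it suffices to bound the number of isomorphism types of this combined structure as $\mathbf{x}$, $\mathfrak{q}$, and $I$ vary under the given constraints.

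First, since $I$ is minimally generated by $s$ monomials, $L_I = \{m_F \mid F \subseteq [s]\}$ has at most $2^s$ elements; hence there are only finitely many isomorphism types of finite lattices that can occur as $L_I$. Second, each vertex $m_F$ is colored by its $G$-degree, an element of the finite group $G$, so the number of possible $G$-colorings of a lattice of size at most $2^s$ is at most $|G|^{2^s}$, hence finite.

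Third, by the formula $C(\mathbf{x}^{\boldsymbol{\alpha}},\mathbf{x}^{\boldsymbol{\beta}})=\prod_{i>j}\chi(x_i,x_j)^{\alpha_i\beta_j}$, every edge weight of $G_I$ lies in the subgroup of $\kk^*$ generated by the image of $\chi\colon G\times G\to \kk^*$. Since $G$ is finite, this image is a finite subset of $\kk^*$, and so the number of weighted directed graphs on at most $2^s$ vertices with weights taken from this finite set (or absent) is also finite.

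Combining these three bounds, the triple $(L_I,\text{coloring},G_I)$ takes only finitely many isomorphism types, and Theorem~\ref{thm:IsoClasses} then yields only finitely many isomorphism classes of $\pi^{\geq2}(\kk_\mathfrak{q}[\mathbf{x}]/I)$. The linchpin of the argument is the finiteness of $\chi(G\times G)\subseteq\kk^*$; this is the only place where the hypothesis that $G$ is finite (equivalently, that all $q_{i,j}$ are roots of unity) is used. Without it the edge weights of $G_I$ could range over an infinite subset of $\kk^*$ and the counting step would fail, which is consistent with the example provided earlier in the paper showing that the hypothesis cannot be dropped.
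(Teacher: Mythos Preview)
Your approach is essentially the same as the paper's: bound the number of possible enriched pairs $(L_I,G_I)$ and invoke Theorem~\ref{thm:IsoClasses}. The paper's proof is terser but follows the identical outline, so your proposal is on target.

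One small imprecision to tighten: you correctly observe that each edge weight $C(m_P,m_Q)=\prod_{i>j}\chi(x_i,x_j)^{\alpha_i\beta_j}$ lies in the subgroup of $\kk^*$ generated by the image of $\chi$, and then note that the image of $\chi$ is finite because $G\times G$ is finite. But a finite subset of $\kk^*$ can generate an infinite subgroup, so finiteness of the image alone does not give finiteness of the set of possible weights. What you need is that every value of $\chi$ is a root of unity: for $g,h\in G$ one has $\chi(g,h)^{|G|}=\chi(g^{|G|},h)=\chi(1_G,h)=1$, so the image of $\chi$ lies in $\mu_{|G|}(\kk)$, and hence so does the subgroup it generates. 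With this one-line fix your counting argument goes through and matches the paper's reasoning that ``there are finitely many possibilities for the labels (since the group of colors is finite).''
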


\begin{proof}
Let $I$ be a monomial ideal in $\kk[\mathbf{x}]$ generated by $s$ monomials. Then the number of vertices in the GCD graph $G_I$ is equal to the cardinality of the LCM lattice $L_I$, which is at most $2^s$. Since there are only finitely many possibilities for the edges of $G_I$ and finitely many possibilities for the labels (since the group of colors is finite), it follows that there are only finitely many possibilities for the pair $(L_I,G_I)$. The result now follows from the previous theorem.
\end{proof}

\begin{example}\label{example}
In this example we show that the hypothesis that $G$ is finite in the previous corollary is necessary. Indeed, let $n$ be a natural number $n\geq2$, and consider the rings $S_n=\Bbbk_q[x,y]/(x^n,y^n)$ where $xy=qyx$ and $q\in\Bbbk^*$ is not a root of unity. By \cite[Theorem 9.1]{FM}, it follows that
\[
\pi^{\geq2}(S_n)=\kk\theta_1\oplus\kk\theta_2,
\]
where the $G$-degree of $\theta_1$ (respectively $\theta_2$) is the inverse of the $G$-degree of $x^n$ (respectively the inverse of the $G$-degree of $y^n$), and both have cohomological degree 2. If $q$ is not a root of unity and if $n\neq m$, then $\pi^{\geq2}(S_n)$ and $\pi^{\geq2}(S_m)$ are not isomorphic as $G$-graded $\kk$-vector spaces and therefore they are not isomorphic as graded color Lie algebras.
\end{example}

\begin{remark}
Let $M$ be a finitely generated graded $S$-module with $S=\kk_\mathfrak{q}[\mathbf{x}]/I$, where $I$ is a monomial ideal. Let $\beta^S_i(M)$ be the rank of the free $S$-module in cohomological degree $i$ in the minimal free resolution of $M$. The \emph{Poincar\'{e} series} of $M$ is
\[
\mathrm{P}_M^S(t)=\sum_{i=0}^\infty\beta^S_i(M)t^i.
\]
It follows from \cite[Theorem 7.13]{FM} that
\[
\mathrm{P}_\kk^S(t)=\frac{\displaystyle\prod_{i=1}^\infty(1+t^{2i-1})^{\mathrm{rank}_\kk\pi^{2i-1}(S)}}{\displaystyle\prod_{i=1}^\infty(1-t^{2i})^{\mathrm{rank}_\kk\pi^{2i}(S)}}.
\]
We also point out that $\mathrm{rank}_\kk\pi^1(S)=|\mathbf{x}|$.
\end{remark}

The following corollary follows from \cref{{thm:IsoClasses}} and from the previous remark.
\begin{corollary}
Let $R=\Bbbk_\mathfrak{q}[\mathbf{x}]$ and $R'=\Bbbk_{\mathfrak{q}'}[\mathbf{x}']$ be skew polynomial rings with the same group of colors and same group bicharacter. Let $I$ and $I'$ be monomial ideals of $R$ and $R'$ respectively, minimally generated by $s$ monomials of degree at least 2. If there is a color preserving isomorphism of lattices $\lambda:L_I\rightarrow L_{I'}$ that induces an isomorphism of graphs $G_I\rightarrow G_{I'}$, then
\[
\frac{\mathrm{P}^{R/I}_\kk(t)}{(1+t)^{|\mathbf{x}|}}=\frac{\mathrm{P}^{R'/I'}_\kk(t)}{(1+t)^{|\mathbf{x}'|}}.
\]
\end{corollary}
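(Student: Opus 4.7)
The plan is to combine the main theorem \cref{thm:IsoClasses} with the Poincar\'{e} series formula in the preceding remark. By \cref{thm:IsoClasses}, the hypotheses on $\lambda$ give an isomorphism $\pi^{\geq2}(R/I)\cong\pi^{\geq2}(R'/I')$ of graded color Lie algebras. In particular, for every $i\geq 2$ we have an equality of $\kk$-vector space dimensions
\[
\mathrm{rank}_\kk\pi^i(R/I)=\mathrm{rank}_\kk\pi^i(R'/I').
\]

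Next, I would invoke the product formula
\[
\mathrm{P}_\kk^S(t)=\frac{\prod_{i=1}^\infty(1+t^{2i-1})^{\mathrm{rank}_\kk\pi^{2i-1}(S)}}{\prod_{i=1}^\infty(1-t^{2i})^{\mathrm{rank}_\kk\pi^{2i}(S)}}
\]
from the remark. The only factor on the right-hand side that involves $\pi^1(S)$ is $(1+t)^{\mathrm{rank}_\kk\pi^1(S)}$, and by the remark $\mathrm{rank}_\kk\pi^1(R/I)=|\mathbf{x}|$ (which uses that $I$ is generated in degrees $\geq 2$, so the images of $x_1,\ldots,x_n$ form a minimal generating set for the maximal ideal of $R/I$). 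Thus dividing through by $(1+t)^{|\mathbf{x}|}$ gives
\[
\frac{\mathrm{P}_\kk^{R/I}(t)}{(1+t)^{|\mathbf{x}|}}=\frac{\prod_{i=2}^\infty(1+t^{2i-1})^{\mathrm{rank}_\kk\pi^{2i-1}(R/I)}}{\prod_{i=1}^\infty(1-t^{2i})^{\mathrm{rank}_\kk\pi^{2i}(R/I)}},
\]
and analogously for $R'/I'$.

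Finally, I would compare the two resulting expressions term-by-term: every exponent appearing in either infinite product is of the form $\mathrm{rank}_\kk\pi^i$ with $i\geq 2$, and these agree on both sides by the isomorphism produced from \cref{thm:IsoClasses}. Hence the two rational functions are equal, which is exactly the claimed identity. There is no real obstacle here; the only subtlety worth flagging is the identification $\mathrm{rank}_\kk\pi^1(R/I)=|\mathbf{x}|$, which relies on the degree-$\geq 2$ hypothesis on the generators of $I$ so that the variables survive as a minimal generating set of the maximal ideal, ensuring the degree-$1$ contributions to the Poincar\'{e} series are isolated in the single factor $(1+t)^{|\mathbf{x}|}$ we divide out.
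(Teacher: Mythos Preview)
Your argument is correct and matches the paper's approach exactly: the paper simply states that the corollary follows from \cref{thm:IsoClasses} and the preceding remark, and you have spelled out precisely how those two ingredients combine.
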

Similarly we can also deduce the following
\begin{corollary}
Let $s$ be a natural number, $G$ a finite abelian group, and $\chi$ an alternating bicharacter on $G$. If $\mathbf{x}$ ranges over all finite sets of indeterminates, $\kk_\mathfrak{q}[\mathbf{x}]$ ranges among all skew polynomial rings with group of colors $G$ and bicharacter $\chi$, and $I$ ranges over all ideals in $\kk_\mathfrak{q}[\mathbf{x}]$ generated by $s$ monomials of degree at least 2, then there are only finitely many possibilities for the power series
\[
\frac{\mathrm{P}^{\kk_\mathfrak{q}[\mathbf{x}]/I}_\kk(t)}{(1+t)^{|\mathbf{x}|}}.
\]

\end{corollary}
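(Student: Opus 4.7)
The plan is to reduce the statement to \cref{cor:FiniteIsoClasses}. By the formula recorded in the preceding remark, for any choice of $R=\kk_\mathfrak{q}[\mathbf{x}]$ and monomial ideal $I$ of the type considered, one has
\[
\mathrm{P}^{R/I}_\kk(t) \;=\; \frac{\displaystyle\prod_{i=1}^\infty(1+t^{2i-1})^{\mathrm{rank}_\kk\pi^{2i-1}(R/I)}}{\displaystyle\prod_{i=1}^\infty(1-t^{2i})^{\mathrm{rank}_\kk\pi^{2i}(R/I)}}.
\]
Using $\mathrm{rank}_\kk\pi^1(R/I)=|\mathbf{x}|$, I would peel off the $i=1$ numerator factor to obtain
\[
\frac{\mathrm{P}^{R/I}_\kk(t)}{(1+t)^{|\mathbf{x}|}} \;=\; \frac{\displaystyle\prod_{i=2}^\infty(1+t^{2i-1})^{\mathrm{rank}_\kk\pi^{2i-1}(R/I)}}{\displaystyle\prod_{i=1}^\infty(1-t^{2i})^{\mathrm{rank}_\kk\pi^{2i}(R/I)}}.
\]
The key observation is that the right-hand side depends only on the sequence of graded dimensions $\bigl(\mathrm{rank}_\kk\pi^{n}(R/I)\bigr)_{n\geq 2}$, i.e.\ on $\pi^{\geq 2}(R/I)$ viewed merely as a $\mathbb{Z}$-graded $\kk$-vector space.

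Next I would invoke \cref{cor:FiniteIsoClasses}, applied to the same data $(s,G,\chi)$, to conclude that as $\mathbf{x}$, $\mathfrak q$, and $I$ vary subject to the stated conditions, only finitely many isomorphism classes of graded color Lie algebras $\pi^{\geq2}(\kk_\mathfrak{q}[\mathbf{x}]/I)$ occur. In particular only finitely many isomorphism classes of underlying graded $\kk$-vector spaces occur, and hence only finitely many sequences $\bigl(\mathrm{rank}_\kk\pi^{n}(R/I)\bigr)_{n\geq2}$ arise. Substituting these finitely many sequences into the displayed formula yields only finitely many possibilities for $\mathrm{P}^{R/I}_\kk(t)/(1+t)^{|\mathbf{x}|}$.

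There is no real obstacle: the whole corollary is a bookkeeping consequence of \cref{cor:FiniteIsoClasses} together with the Poincar\'e series formula from the previous remark. The only small point that warrants a sentence is that each $\pi^n(R/I)$ is finite-dimensional over $\kk$ (so the ranks appearing in the product are well-defined natural numbers), which follows since $R/I$ is a noetherian connected graded $\kk$-algebra and $\pi^n(R/I)$ is a graded subquotient of $\mathrm{Ext}^n_{R/I}(\kk,\kk)$.
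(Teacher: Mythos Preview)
Your proposal is correct and matches the paper's approach: the paper gives no explicit proof, simply writing ``Similarly we can also deduce the following,'' and your argument is exactly the intended deduction from \cref{cor:FiniteIsoClasses} together with the Poincar\'e series formula in the preceding remark. One small wrinkle you did not mention: the corollary is stated for ideals \emph{generated} (not minimally generated) by $s$ monomials, so strictly speaking you apply \cref{cor:FiniteIsoClasses} for each $s'\leq s$ and take the finite union of the resulting finite sets.
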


%Citations: \cite{Taylor},\cite{Gemeda},\cite{Avramov}

\bibliographystyle{amsplain}

\bibliography{biblio}

\end{document}